\numberwithin{equation}{section}
\theoremstyle{definition}
\newtheorem{definition}{Definition}[section]
\theoremstyle{remark}
\newtheorem{remark}[definition]{Remark}
\theoremstyle{plain}
\newtheorem{proposition}[definition]{Proposition}
\newtheorem{theorem}[definition]{Theorem}
\newtheorem{lemma}[definition]{Lemma}
\newtheorem{result}[definition]{Result}
\newtheorem{corollary}[definition]{Corollary}
\newtheorem{conjecture}[definition]{Conjecture}
\newenvironment{customthm}[1]{\innercustomthm}{\endinnercustomthm}
\definecolor{Wine}{rgb}{0.5,0,0.05}
\definecolor{DPurple}{rgb}{0.46,0.2,0.69}
\definecolor{Red}{rgb}{1,0,0}
\newcommand{\koba}{\mathsf{k}}
\newcommand{\dkoba}{\kappa}
\newcommand{\rprt}{\mathsf{Re}}
\newcommand{\iprt}{\mathsf{Im}}
\newcommand{\distance}{\mathrm{dist}}
\newcommand{\dtb}[1]{\delta_{#1}}
\newcommand{\bcdot}{\boldsymbol{\cdot}}
\newcommand{\bdy}{\partial}
\newcommand{\wt}{\widetilde}
\newcommand*{\defeq}{\mathrel{\vcenter{\baselineskip0.5ex \lineskiplimit0pt \hbox{\scriptsize.}\hbox{\scriptsize.}}}=}
\newcommand*{\defines}{=\mathrel{\vcenter{\baselineskip0.5ex \lineskiplimit0pt \hbox{\scriptsize.}\hbox{\scriptsize.}}}}
\newcommand{\cmpl}{\mathsf{c}}
\newcommand{\multByi}{\mathbb{J}}
\newcommand{\OM}{\Omega}
\newcommand{\unitdisk}{\mathbb{D}}
\newcommand{\geodModDomB}{\mathcal{D}(\alpha,\tau)}
\newcommand{\smoo}{\mathcal{C}}
\newcommand{\xiz}{{}^{\raisebox{-1pt}{$\scriptstyle{\xi\!}$}}z}
\newcommand{\C}{\mathbb{C}} 
\newcommand{\R}{\mathbb{R}}
\begin{document}
\title{On the continuous extension of Kobayashi isometries}
\author{Anwoy Maitra}
\address{Department of Mathematics, Indian Institute of Science, Bangalore 560012, India}
\email{anwoymaitra@iisc.ac.in}

\begin{abstract}
	We provide a sufficient condition for the continuous extension of isometries for the Kobayashi distance
	between bounded convex domains in complex Euclidean spaces having boundaries that are only
	slightly more regular than $\smoo^1$. This is a generalization of a recent result by A.~Zimmer.
\end{abstract}

\keywords{Boundary regularity, convex domains, isometries, Kobayashi distance}
\subjclass[2010]{Primary: 32F45, 32H40; Secondary: 53C22}

\maketitle

\vspace{-8mm}
\section{Introduction}

In this paper, we provide a sufficient condition for the continuous extension to $\overline{\OM}_1$ of isometries, with respect to the
Kobayashi distances on $\OM_1$ and $\OM_2$, between a pair of bounded convex domains $\OM_1$ and $\OM_2$ in complex
Euclidean spaces (of not necessarily the same dimension). In this setting it is well
known that such isometries \emph{do} exist. A consequence of fundamental work by Lempert \cite{Lemp_Kob_Met,
Lemp_Holo_Ret_Int_Met} is that if $\OM \subset \C^n$ is a bounded convex domain, then given a pair
of distinct points $z_1,z_2 \in \OM$, there exists a \emph{holomorphic} map $F: \unitdisk \to \OM$ that is an isometry
with respect to the Kobayashi distances on $\unitdisk$ and $\OM$ and such that $z_1,z_2 \in F(\unitdisk)$. We
call such a map a \emph{complex geodesic} of $\OM$ through $z_1$ and $z_2$.

\smallskip 

The question of whether a complex
geodesic extends continuously to $\overline{\unitdisk}$ is not an easy one. The earliest result
in this direction was given by Lempert \cite{Lemp_Kob_Met}, which states that if
$\OM \subset \C^n$ is strongly convex with $\smoo^k$-smooth boundary, $k \geqslant 2$, then every
complex geodesic $F: \unitdisk \to \OM$ extends to a $\smoo^{k-2}$-smooth mapping on $\overline{\unitdisk}$ (by a
$\smoo^0$-smooth mapping we mean a continuous one).
Since then, there has been a number of works dealing with the continuous (or smooth) extension of complex geodesics;
see \cite{Aba_Bdy_Bhvr_Inv_Dist,Mer_Comp_Geo_It,Bha_Comp_Geo,Zim_CharDomLimAut}.

\smallskip

While Lempert's result might suggest that the boundary regularity of the target convex domain $\OM$ controls the boundary behaviour of a
complex geodesic of $\OM$, that is not the case\,---\,see \cite[Remark~1.8]{Huang_Bdy_Rig_Hol} and \cite[Example~1.2]{Bha_Comp_Geo}. The
latter example shows that there exist $\smoo^{\infty}$-smoothly bounded convex domains having complex geodesics that do not extend
continuously to $\overline{\unitdisk}$. In view of this, the question of $\smoo^0$-extension of Kobayashi isometries \emph{in general} is
certainly a challenging one.

\smallskip

Before we state the main result of this paper, let us look at the motivations behind it. Our chief motivation is the following recent
result by Zimmer:

\begin{result}[Zimmer {\cite[Theorem~2.18]{Zim_CharDomLimAut}}] \label{R:Zim_Cont_Ext_Isom}
	Let $\OM_j \subset \C^{n_j}$, $j=1,2$, be bounded convex domains with $\smoo^{1,\alpha}$-smooth boundaries,
	where $\alpha \in (0,1)$.
	Suppose that $\OM_2$ is $\C$-strictly convex. Let $F: \OM_1 \to \OM_2$ be an isometric embedding with respect to the Kobayashi
	distances. Then $F$ extends to a continuous map $\wt{F}: \overline{\OM}_1 \to \overline{\OM}_2$.	
\end{result}

\noindent{Recall that for a convex, $\smoo^1$-smoothly bounded 
domain $\OM \subset \C^n$, to be \emph{$\C$-strictly convex} means that for every $\xi \in \bdy \OM$, 
\[ 
  \big( \xi + T^{\C}_{\xi}(\bdy \OM) \big) \cap \overline{\OM} = \{\xi\},
\]
where $T^{\C}_{\xi}(\bdy \OM)$ denotes the complex tangent space to $\bdy \OM$ at $\xi$, given by $T_{\xi}(\bdy \OM) \cap i T_{\xi}(\bdy
\OM)$, and where we view $T_{\xi}(\bdy \OM)$ \emph{extrinsically} as a real hyperplane in $\R^{2n} \equiv \C^n$ (also see 
Section~\ref{S:Some_facts_convex_domains}).}

\smallskip

A close reading of the proof of the above result reveals that it actually establishes a stronger result. Before we can state this result,
we need to fix some pieces of notation. The first set of notations pertains to the real category. For $U \subset \R^d$
an open set and $f:U \to \R$ a $\smoo^1$-smooth function, $Df$ will denote the total derivative of $f$; it is a continuous mapping from
$U$ into $\mathscr{L}(\R^d,\R)$. For a vector $v \in \R^d$ with $\|v\|=1$, $D_v$ will denote the directional derivative in the direction
of $v$.

\smallskip

In what follows, we shall identify $\C^n$ with $\R^{2n}$ in the following manner:
\[
  \C^n \ni z = (z_1,\ldots,z_n) \longleftrightarrow (\rprt(z_1),\iprt(z_1),\rprt(z_2),\iprt(z_2),\ldots,\rprt(z_n),\iprt(z_n)) \in
  \R^{2n}.
\]
We let $\multByi$ denote multiplication by $i$ in $\C^n$ regarded as an $\R$-linear map from $\C^n$ to itself. In terms of the above
identification,
\[
  \multByi(x_1,\ldots, x_{2n}) = (-x_2,x_1,\ldots,-x_{2n},x_{2n-1}) \quad \forall \, (x_1,\ldots, x_{2n}) \in
  \R^{2n}.
\]
Given $a \in \C^n$ and $r > 0$, $B^{(n)}(a,r)$ will denote the open Euclidean ball in $\C^n$ with centre $a$ and
radius $r$.

We are now in a position to state the above-mentioned result. In this result, for any $\xi \in \bdy \OM$, $\eta^{\xi}$ will denote the
unit inward-pointing normal to $\bdy \OM$ at $\xi$. 

\begin{customthm}
{{\ref{R:Zim_Cont_Ext_Isom}}$\boldsymbol{{}^{\!\prime}}$}
[follows from the proof of {\cite[Theorem~2.18]{Zim_CharDomLimAut}}]
\label{R:Zim_Cont_Ext_Isom_Inf}
	Let $\Omega_j \subset \C^{n_j}$, $j=1,2$, be bounded convex domains with $\smoo^1$-smooth boundaries. 
	Suppose that there exist a constant $r > 0$, an $\alpha \in (0,1)$, and, for each $j=1,2$, a defining function $\rho_j$ for
	$\Omega_j$ such that for each $\xi \in \bdy \Omega_j$, the directional derivative $D_{\multByi\left( \eta^{\xi} \right)}\rho_j$ is 
	$\alpha$-H{\"o}lder-continuous on the ball $B^{(n_j)}(\xi,r)$. If $\Omega_2$ is $\C$-strictly convex, then every isometric embedding
	$F: \Omega_1 \to \Omega_2$ with respect to the Kobayashi distances extends to a continuous map $\widetilde{F} : \overline{\Omega}_1
	\to \overline{\Omega}_2$.
\end{customthm} 

If $\OM$ is $\smoo^1$-smoothly bounded, $\bdy\OM\ni \xi \mapsto \multByi(\eta^{\xi})$ is what is sometimes called the
\emph{complex-normal vector field} on $\bdy \OM$. The geometrical significance of the hypothesis in the above result
is as follows: one does not require $\bdy \OM_j$ to be a $\smoo^{1,\alpha}$-smooth manifold, $j=1,2$, for the
conclusion of Result~\ref{R:Zim_Cont_Ext_Isom} to hold true; it suffices to control the behaviour of $\bdy \OM_1$
and $\bdy \OM_2$ in the complex-normal directions. As stated earlier, the proof of Result~${\ref{R:Zim_Cont_Ext_Isom}}^{\!\prime}$ 
follows from a careful reading of the proof of \cite[Theorem~2.18]{Zim_CharDomLimAut} (and we shall see the required ingredients in the
proof of our main theorem). 

\smallskip

All of this raises the question whether the conclusion of the above results holds true under even lower regularity of $\bdy \OM_j$,
$j=1,2$. This question is also suggested by a related result in \cite{Bha_Comp_Geo} in which certain convex domains with
just $\smoo^1$-smooth boundaries are considered (which we shall see below). This is the second motivation for our result. But first, we
need a definition. 

\begin{definition}
	We say that a Lebesgue-measurable function $g: [0,\epsilon_0) \to [0,\infty)$, where $\epsilon_0>0$, \emph{satisfies a Dini condition}
	if 
	\[
	   \int_0^{\epsilon_0} \frac{g(t)}{t} dt < \infty.
	\]	
\end{definition}

Our main theorem (whose relation to Result~\ref{R:Zim_Cont_Ext_Isom}\,---\,via
Result~$\ref{R:Zim_Cont_Ext_Isom}^{{\!\prime}}$\,---\,is clear) is:

\begin{theorem} \label{T:contExtIsom}
	Let $\Omega_j \subset \C^{n_j}$, $j=1,2$, be bounded convex domains with $\smoo^1$-smooth boundaries.
	Suppose that there exist a constant $r > 0$ and, for each $j=1,2$, a defining function $\rho_j$ for $\Omega_j$ such that
	for each $\xi \in \bdy \Omega_j$, the directional derivative $D_{\multByi\left( \eta^{\xi} \right)}\rho_j$ has modulus of
	continuity $\omega$ on the ball $B^{(n_j)}(\xi,r)$. Assume that $\omega$ satisfies a Dini condition. If $\Omega_2$ is $\C$-strictly
	convex, then every isometric embedding $F: \Omega_1 \to \Omega_2$ with respect to the Kobayashi distances extends to a continuous map
	$\widetilde{F} : \overline{\Omega}_1 \to \overline{\Omega}_2$. 
\end{theorem}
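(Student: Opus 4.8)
The plan is to show continuous extension by generalizing Zimmer's argument, with the Dini condition replacing $\smoo^{1,\alpha}$-regularity. Because Result~$\ref{R:Zim_Cont_Ext_Isom}^{\!\prime}$ tells us the $\alpha$-H\"older control on the complex-normal derivative $D_{\multByi(\eta^\xi)}\rho_j$ is what actually drives the proof, the first step is to isolate exactly which analytic estimate the H\"older exponent $\alpha$ produces, and then re-derive that estimate from a general modulus of continuity $\omega$ satisfying a Dini condition. I expect the role of $\alpha$-H\"older continuity to be a quantitative control on how fast $\bdy\OM_j$ separates from its real tangent hyperplane in the complex-normal direction as one approaches a boundary point; integrability of $\omega(t)/t$ is the natural weakening that still yields a \emph{finite} (rather than merely bounded) estimate after integrating along the relevant radial path.

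\smallskip

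Concretely, I would proceed as follows. First, reduce to a statement about the boundary behaviour of complex geodesics: since $F$ is a Kobayashi isometric embedding, its continuous extension can be analyzed through the images of complex geodesics of $\OM_1$, and the key is to control the boundary cluster sets. Second, I would establish the core geometric lemma: the Dini condition on $\omega$ implies that for each $\xi \in \bdy\OM_j$, the defining function satisfies an estimate of the form $|\rho_j(\xi + t\,\multByi(\eta^\xi))| \le \int_0^t \omega(s)\,ds$ along the complex-normal direction, and more importantly that the ``horosphere-type'' or ``Gromov-product'' estimates used by Zimmer remain valid when $t^\alpha$ is replaced by the antiderivative $\int_0^t \omega(s)/s\,ds$ (or a comparable Dini-type integral). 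Third, I would transport the H\"older-based comparison between the Kobayashi distance and the Euclidean distance-to-the-boundary into a $\omega$-based comparison, which is exactly the point where the Dini condition is needed: the finiteness of $\int_0^{\epsilon_0}\omega(t)/t\,dt$ guarantees that the accumulated error along an approach path to the boundary stays bounded, so that the relevant sublevel estimates (and hence the equicontinuity needed for continuous extension) survive. Fourth, I would invoke $\C$-strict convexity of $\OM_2$ exactly as in Zimmer's proof to pin down that boundary cluster sets are singletons, yielding the continuous extension $\wt{F}$.

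\smallskip

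The main obstacle, I expect, is the second and third steps: faithfully re-proving Zimmer's distance estimates with $\omega$ in place of $t^\alpha$. In the H\"older setting one repeatedly uses the convenient algebraic properties of the power $t^\alpha$ (e.g. $t^\alpha/t = t^{\alpha-1}$ is integrable, and $(st)^\alpha = s^\alpha t^\alpha$ factors cleanly), which are precisely what make the estimates tidy; with a general Dini modulus $\omega$ one loses homogeneity and must work instead with the monotone antiderivative $\Phi(t) \defeq \int_0^t \omega(s)/s\,ds$ and argue that $\Phi(t) \to 0$ as $t \to 0^+$, that $\Phi$ can be taken concave (or replaced by a concave majorant) so that the triangle-inequality manipulations go through, and that the various inequalities bounding the Kobayashi distance from below by a logarithmic term plus a controlled remainder still close up. A secondary subtlety is verifying that the modulus-of-continuity hypothesis on $D_{\multByi(\eta^\xi)}\rho_j$, which is stated pointwise over all $\xi \in \bdy\OM_j$ on balls of a fixed radius $r$, indeed furnishes the uniform (in $\xi$) estimates needed; here I would use compactness of $\bdy\OM_j$ together with the $\smoo^1$-smoothness to patch local estimates into a uniform one. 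Once these analytic replacements are in place, the topological conclusion follows by the same cluster-set argument as in \cite{Zim_CharDomLimAut}.
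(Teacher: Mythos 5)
Your proposal has the right overall architecture---model the separation of $\bdy\OM_j$ from its tangent plane in the complex-normal plane by the antiderivative $h(t)=\int_0^t\omega(s)\,ds$, show that the inward normal rays are $K$-almost-geodesics, then run Zimmer's Gromov-product/cluster-set argument and use $\C$-strict convexity to collapse cluster sets to points---and you correctly locate the difficulty in the loss of homogeneity of $t^\alpha$. But there is a genuine gap at precisely the point you flag as the ``main obstacle'': the replacement you propose (working with $\Phi(t)=\int_0^t\omega(s)/s\,ds$, a concave majorant, and triangle-inequality manipulations) does not produce the estimate that is actually needed, namely
\[
\koba_{\OM}\bigl(\xi+t_1\eta^{\xi},\,\xi+t_2\eta^{\xi}\bigr)\ \leqslant\ \tfrac{1}{2}\log(t_2/t_1)+C
\]
with leading coefficient \emph{exactly} $\tfrac{1}{2}$, uniformly in $\xi\in\bdy\OM$. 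That sharp coefficient is indispensable: the matching lower bound $\tfrac{1}{2}|\log(t_1/t_2)|$ comes for free from the supporting hyperplane (Result~\ref{R:lb_Kob_dist_hyp}), and only when the two coefficients agree do the normal rays become $K$-almost-geodesics (Proposition~\ref{P:param_as_K-a-g}) and do the Gromov-product computations behind Proposition~\ref{P:nice_doms_Grom_hyp_app} close up. Cruder substitutes---for instance comparing with the cone $\mathcal{B}_{\epsilon}$ contained in the model domain---yield an upper bound whose coefficient is strictly larger than $\tfrac{1}{2}$, which is useless here.

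The missing idea is an input from conformal mapping theory. One embeds the planar model domain $\geodModDomB=\{s+it \mid \alpha h(t)<s<\tau,\ |t|<\tau\}$ into $\OM$ along each normal (your second step; this is Proposition~\ref{P:emb_mod_dom}), checks that the tangent angle of $\bdy\geodModDomB$ as a function of arc length has modulus of continuity dominated by $\alpha\omega$ (Lemma~\ref{L:mod_dom_sat_Warsch_con}), and then invokes Warschawski's theorem (Result~\ref{R:Warsch_ext_bihol}): the Dini condition on the tangent angle guarantees that the Riemann map $g:\geodModDomB\to\unitdisk$ with $g(0)=1$ has a finite, nonzero angular derivative at $0$, whence $1-g(t)\asymp t$ and transporting $\koba_{\unitdisk}$ through $g$ gives exactly the $\tfrac{1}{2}\log(t_2/t_1)+C$ bound. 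This is where the Dini hypothesis is really consumed---not in bounding an ``accumulated error along an approach path,'' but in guaranteeing boundary differentiability of a conformal map; note also that the geometrically relevant antiderivative is $h(t)=\int_0^t\omega(s)\,ds$ (the profile of the model domain), while $\int_0^t\omega(s)/s\,ds$ enters only as the hypothesis of Warschawski's theorem, and your write-up conflates the two. A secondary remark: the reduction ``to complex geodesics'' in your first step is not how the argument runs (and is not an obvious reduction when $n_1>1$); the paper works directly with arbitrary sequences via the Gromov product, which your fourth step in effect already does.
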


In view of our discussion on complex geodesics above, we have the following immediate corollary to Theorem~\ref{T:contExtIsom}:

\begin{corollary}
	Let $\OM \subset \C^n$ satisfy the conditions on $\OM_2$ of Theorem~\ref{T:contExtIsom}.
	Then every complex geodesic of $\OM$ extends continuously to $\overline{\unitdisk}$.
\end{corollary}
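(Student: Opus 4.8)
The plan is to recognize the corollary as a direct specialization of Theorem~\ref{T:contExtIsom}, with the unit disk playing the role of the source domain. By the definition of a complex geodesic recalled in the introduction, a complex geodesic of $\OM$ is a holomorphic map $F: \unitdisk \to \OM$ that is an isometry between the Kobayashi distances on $\unitdisk$ and on $\OM$; in particular it is an isometric embedding $F: \unitdisk \to \OM$ with respect to the Kobayashi distances. Thus, writing $\OM_1 \defeq \unitdisk \subset \C$ (so that $n_1 = 1$, which the theorem permits) and $\OM_2 \defeq \OM$, every complex geodesic of $\OM$ is exactly a map of the type covered by Theorem~\ref{T:contExtIsom}, and I would obtain the conclusion by verifying that the pair $(\unitdisk, \OM)$ satisfies the hypotheses of that theorem and then invoking it.

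The only item requiring verification is that $\unitdisk$, as the source domain $\OM_1$, meets the boundary-regularity hypothesis. Here $\unitdisk$ is a bounded convex domain whose boundary is real-analytic, and in particular $\smoo^1$-smooth, so I would take the smooth defining function $\rho_1(z) \defeq |z|^2 - 1$. Since $\rho_1$ is smooth on all of $\C$, its gradient is smooth, hence Lipschitz on any bounded neighbourhood of $\overline{\unitdisk}$; consequently, for each $\xi \in \bdy\unitdisk$ the directional derivative $D_{\multByi(\eta^\xi)}\rho_1$, taken in the fixed complex-normal direction $\multByi(\eta^\xi) = -i\xi$, is a Lipschitz function of position on the ball $B^{(1)}(\xi,r)$, with a Lipschitz constant $L$ that may be chosen uniform in $\xi$ by compactness of $\bdy\unitdisk$. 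Hence $D_{\multByi(\eta^\xi)}\rho_1$ admits the linear modulus of continuity $\omega_1(t) \defeq Lt$, which trivially satisfies the Dini condition.

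The single genuinely technical nicety is that Theorem~\ref{T:contExtIsom} asks for one common modulus of continuity $\omega$ serving both domains. By hypothesis $\OM$ (as $\OM_2$) admits a defining function $\rho_2$ whose complex-normal directional derivatives have some modulus of continuity $\omega_2$ satisfying a Dini condition, while the computation above furnishes $\omega_1(t) = Lt$ for $\unitdisk$. I would then set $\omega \defeq \omega_1 + \omega_2$ (equivalently, one could take $\max(\omega_1,\omega_2)$): this is a valid modulus of continuity for the relevant directional derivatives of both $\rho_1$ and $\rho_2$, and since $\int_0^{\epsilon_0}\omega(t)/t\,dt \leqslant \int_0^{\epsilon_0}\omega_1(t)/t\,dt + \int_0^{\epsilon_0}\omega_2(t)/t\,dt < \infty$, it again satisfies the Dini condition. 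With this common $\omega$ in hand, and with $\OM_2 = \OM$ being $\C$-strictly convex by assumption, all hypotheses of Theorem~\ref{T:contExtIsom} are met; applying the theorem to the isometric embedding $F: \unitdisk \to \OM$ produces a continuous extension $\wt{F}: \overline{\unitdisk} \to \overline{\OM}$, which is exactly the assertion. I expect no substantial obstacle here\,---\,all the analytic content lives in Theorem~\ref{T:contExtIsom}\,---\,so the only step needing care is the bookkeeping above: confirming that the disk's smoothness supplies a Dini modulus and that the two moduli can be amalgamated without destroying the Dini property.
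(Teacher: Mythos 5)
Your proposal is correct and is exactly the argument the paper intends when it calls this an ``immediate corollary'' of Theorem~\ref{T:contExtIsom}: a complex geodesic is an isometric embedding $F:\unitdisk\to\OM$, and $\unitdisk$ (being smoothly bounded and convex) satisfies the hypotheses on $\OM_1$. Your care over amalgamating the two moduli into a single Dini modulus (noting that the proof of the theorem only ever uses $\omega$ as a dominating, subadditive, non-decreasing bound) is a detail the paper leaves implicit, but it is handled correctly.
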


We note that there exist plenty of functions on intervals of the form $[0,\epsilon_0)$ that 
satisfy a Dini condition but which are not $\alpha$-H{\"o}lder-continuous for \emph{any} $\alpha \in (0,1)$; examples are
the functions
\begin{equation}\label{E:Dini_example}
f_{\epsilon}(x) \defeq \begin{cases}
\dfrac{1}{|\log x|^{1+\epsilon}}, &\text{if } x \in (0,1), \\
{} & {} \\
0,                                &\text{if } x=0,
\end{cases} 
\end{equation}
for arbitrary $\epsilon>0$. While Theorem~\ref{T:contExtIsom} generalizes Result~\ref{R:Zim_Cont_Ext_Isom}, what is perhaps
more suggestive are the geometric insights that its proof reveals. Firstly, given bounded convex domains $\OM_1$ and
$\OM_2$ with $\smoo^1$-smooth boundaries, given an isometry $F: \OM_1 \to \OM_2$ with respect to the Kobayashi distances,
and given any point $\xi \in \bdy \OM_2$, how $\bdy \OM_2$ behaves in the \emph{complex-tangential} directions is largely
immaterial to the existence of a continuous extension of $F$ to $\overline{\OM}_1$, owing to adequate control on the local
geometry of $\bdy \OM_2$ at $\xi$ conferred by $\C$-strict convexity. Secondly, some elements of our proof
reveal a certain bound for the Kobayashi distance that might be of independent interest. For greater clarity,
Proposition~\ref{P:Kob_dist_ub} will present the above-mentioned bound for a special case (see
Proposition~\ref{P:Kob_dist_gen_ub} later for the more general result). We need a
definition: we say that a domain $\OM\varsubsetneq \C^n$ has $\smoo^{1,\,{\rm Dini}}$ boundary if $\bdy \OM$ is,
near each $\xi \in \bdy \OM$, the graph (relative to a coordinate chart around $\xi$)
of a $\smoo^1$ function  whose partial derivatives are
Dini-continuous (i.e., have moduli of continuity that satisfy a Dini condition). With this definition, we have:

\begin{proposition}\label{P:Kob_dist_ub}
	Let $\OM\subset \C^n$ be a bounded convex domain with $\smoo^{1,\,{\rm Dini}}$ boundary.
	Let $z_0\in \OM$. Then, there exists a constant $C>0$ such that
	\[
	  \koba_{\OM}(z_0, z) \leqslant C + \frac{1}{2}\log\left( \frac{1}{\distance(z,\OM^{\cmpl})} \right)
	  \quad \forall z\in \OM.
	\]
\end{proposition}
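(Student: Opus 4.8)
The plan is to reduce the $n$-dimensional bound to a \emph{sharp} planar estimate by slicing along the complex line in the normal direction, and to extract the exact coefficient $\tfrac12$ from the boundary regularity of a Riemann map. It suffices to treat $z \in \OM$ with $\delta \defeq \distance(z,\OM^{\cmpl})$ small, since $\koba_{\OM}(z_0,\cdot)$ is bounded on the compact set $\{\,\distance(\cdot,\OM^{\cmpl}) \geqslant \delta_0\,\}$. For such a $z$, I would pick a nearest boundary point $\xi \in \bdy\OM$, so that $z = \xi + \delta\,\eta^{\xi}$. By convexity, boundedness, and the presence of an interior ball, there are a uniform $T_0 > 0$ and a compact $K \subset \OM$ with $\xi + t\,\eta^{\xi} \in \OM$ for $t \in (0,T_0]$ and $\xi + T_0\eta^{\xi} \in K$; the triangle inequality then yields $\koba_{\OM}(z_0,z) \leqslant C_1 + \koba_{\OM}(\xi + T_0\eta^{\xi},\, \xi + \delta\eta^{\xi})$ with $C_1 \defeq \sup_K \koba_{\OM}(z_0,\cdot) < \infty$. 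Now I slice: the planar convex domain $D_{\xi} \defeq \{\,\zeta \in \C : \xi + \zeta\,\eta^{\xi} \in \OM\,\}$ has $0 \in \bdy D_{\xi}$, contains $(0,T_0]$ on the real axis, and receives the holomorphic inclusion $\zeta \mapsto \xi + \zeta\,\eta^{\xi}$. The distance-decreasing property of the Kobayashi distance gives $\koba_{\OM}(\xi + T_0\eta^{\xi}, \xi + \delta\eta^{\xi}) \leqslant \koba_{D_{\xi}}(T_0,\delta)$, so the whole problem reduces to proving $\koba_{D_{\xi}}(T_0,\delta) \leqslant \tfrac12\log(1/\delta) + C_2$ uniformly in $\xi$.

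For the planar estimate I would pass to the Riemann map $\Psi_{\xi}: \unitdisk \to D_{\xi}$, normalized by $\Psi_{\xi}(0) = T_0$ (a legitimate interior point, at distance bounded below uniformly from $\bdy D_{\xi}$ because $\xi + T_0\eta^{\xi} \in K$). Since $\Psi_{\xi}$ is a biholomorphism and $D_{\xi}$ is simply connected, it is a Kobayashi isometry, whence
\[
  \koba_{D_{\xi}}(T_0,\delta) = \koba_{\unitdisk}(0, p_{\delta}) = \tfrac12 \log\frac{1 + |p_{\delta}|}{1 - |p_{\delta}|} \leqslant \tfrac12 \log\frac{2}{1 - |p_{\delta}|}, \qquad p_{\delta} \defeq \Psi_{\xi}^{-1}(\delta).
\]
Thus the sharp coefficient $\tfrac12$ appears automatically, and it remains only to show the lower bound $1 - |p_{\delta}| \geqslant c\,\delta$ uniformly.

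To obtain this I would first record the local geometry of $D_{\xi}$ near $0$. Because $\multByi(\eta^{\xi})$ is \emph{tangent} to $\bdy\OM$ at $\xi$, in the coordinate $\zeta = x + iy$ (so that $\xi + \zeta\eta^{\xi} = \xi + x\,\eta^{\xi} + y\,\multByi(\eta^{\xi})$) the boundary of $D_{\xi}$ near $0$ is a convex graph $x = g_{\xi}(y)$ with $g_{\xi}(0) = g_{\xi}'(0) = 0$, and $g_{\xi}'$ inherits the modulus $\omega$, so $|g_{\xi}(y)| \leqslant \int_0^{|y|}\omega(s)\,ds =: G(|y|)$ uniformly in $\xi$. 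A short computation then gives $\delta - G(\delta) \leqslant \distance(\delta,\bdy D_{\xi}) \leqslant \delta$, hence $\distance(\delta,\bdy D_{\xi}) \geqslant \delta/2$ for $\delta$ small. The crucial analytic input is the classical theorem of Warschawski: for a domain whose boundary is \emph{Dini}-smooth near an arc, the conformal map has a derivative that extends continuously and without zeros up to the closure there; in particular $|\Psi_{\xi}'| \leqslant M$ near $\Psi_{\xi}^{-1}(0)$, and this bound is uniform in $\xi$ by the uniform Dini-smoothness of the arcs $\{x = g_{\xi}(y)\}$. Combining this with the Koebe upper estimate $\distance(\Psi_{\xi}(p),\bdy D_{\xi}) \leqslant (1 - |p|^2)\,|\Psi_{\xi}'(p)|$ gives $1 - |p_{\delta}| \geqslant \distance(\delta,\bdy D_{\xi})/(2M) \geqslant \delta/(4M)$, and therefore $\koba_{\unitdisk}(0,p_{\delta}) \leqslant \tfrac12\log(1/\delta) + C_2$. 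Setting $C = C_1 + C_2$ completes the argument.

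I expect the main obstacle to be precisely the uniform upper bound $|\Psi_{\xi}'| \leqslant M$ near the boundary, i.e.\ ruling out unbounded expansion of the conformal map. This is exactly where the Dini hypothesis is indispensable: for a merely $\smoo^1$ boundary $\Psi_{\xi}'$ can blow up, $D_{\xi}$ need not be asymptotically a half-plane at $0$, and the coefficient $\tfrac12$ genuinely fails (a naive Koebe distortion bound, or comparison with a fixed sub-sector, produces a strictly larger coefficient). The Dini condition enters both through Warschawski's theorem and through the convergent ``defect'' $\int_0^{T_0} G(t)/t^2\,dt \leqslant \int_0^{T_0}\omega(s)/s\,ds < \infty$, which is what secures the half-plane normalization with the exact constant $\tfrac12$ together with its uniformity in $\xi$.
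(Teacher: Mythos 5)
Your overall strategy -- slice $\OM$ along the complex line $\xi+\C\eta^{\xi}$ through the nearest boundary point, pass to a Riemann map of the planar slice, and extract the coefficient $\tfrac12$ from Warschawski's theorem -- is the same circle of ideas the paper uses. But there is a genuine gap at exactly the point you yourself flag as the main obstacle: the \emph{uniformity in $\xi$} of the boundary behaviour of the Riemann maps $\Psi_{\xi}:\unitdisk\to D_{\xi}$. Warschawski's theorem (Result~\ref{R:Warsch_ext_bihol}) is a qualitative statement about a \emph{single} domain and a \emph{single} boundary point: it tells you that $\Psi_{\xi}'$ has a finite nonzero limit along Stolz angles at $\Psi_{\xi}^{-1}(0)$, but it gives no explicit constant $M_{\xi}$ and no radius $\delta_{\xi}$ below which $|\Psi_{\xi}'(p_\delta)|\leqslant M_{\xi}$ holds. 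Since the domains $D_{\xi}$ vary with $\xi$, "uniform Dini-smoothness of the arcs" does not, by itself, convert this into a single $M$ and a single $\delta_0$ valid for all $\xi\in\bdy\OM$; you would need either a quantitative version of Warschawski's estimate (with constants depending only on the Dini integral and on uniform geometric data of $D_{\xi}$) or a compactness argument, and the latter is delicate because boundary regularity of conformal maps is not preserved under Carath\'eodory convergence in any obvious way. A secondary, fixable, issue: even for a fixed $\xi$, Warschawski controls $\Psi_{\xi}'$ only in Stolz angles, so you must first check that the points $p_{\delta}=\Psi_{\xi}^{-1}(\delta)$ approach $\Psi_{\xi}^{-1}(0)$ non-tangentially (this does follow from the existence of a nonzero angular derivative, but it is a step, not a given).

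The paper circumvents precisely this difficulty by inverting the roles of the model and the slice. Instead of applying Warschawski to each slice $D_{\xi}$, it constructs one fixed planar domain $\mathcal{D}(\alpha,\tau)$ (built from the common modulus of continuity $\omega$) and proves, in Proposition~\ref{P:emb_mod_dom}, that $\xi+\mathcal{D}(\alpha,\tau)\,\eta^{\xi}\subset\OM$ for \emph{every} $\xi$, with $\alpha,\tau$ independent of $\xi$. Warschawski's theorem is then applied exactly once, to the fixed domain $\mathcal{D}(\alpha,\tau)$ at $0$ (after checking its hypotheses in Lemma~\ref{L:mod_dom_sat_Warsch_con}), and the symmetric normalization $g(\mathcal{D}(\alpha,\tau)\cap\R)=\unitdisk\cap\R$ keeps $g(t)$ real, so the needed inequality $1-g(t)\geqslant\kappa^{-1}t$ comes directly out of Corollary~\ref{C:coro_Warsch} with a single constant $\kappa$. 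The distance-decreasing property then transports the planar estimate into $\OM$ with constants automatically independent of $\xi$. If you want to salvage your slicing argument, the cleanest repair is to replace $D_{\xi}$ by such a common inscribed model domain -- i.e., to prove the analogue of Proposition~\ref{P:emb_mod_dom} -- rather than to chase a uniform Warschawski estimate over the family $\{D_{\xi}\}$.
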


\noindent{The above estimate is easy to deduce for domains with $\smoo^2$-smooth boundaries. 
For domains with $\smoo^{1,\alpha}$-smooth boundaries, it was established by 
Forstneric--Rosay \cite{Forst_Ros_LocKobMet}. In view of \eqref{E:Dini_example},
Proposition~\ref{P:Kob_dist_ub} applies to domains that are not covered by \cite{Forst_Ros_LocKobMet}.}
\smallskip

We now state the result from \cite{Bha_Comp_Geo} alluded to above. To state it, we need, given a bounded
convex domain $\OM\subset \C^n$ with $\smoo^1$-smooth boundary, the notion of a function that \emph{supports
$\OM$ from the outside}. Roughly speaking, such a function is a convex function
$\Phi: ( B^{(n-1)}(0,r_0), 0 ) \to ( [0,\infty), 0)$ such that, for each $\xi\in \bdy\OM$, there exists a unitary
change of coordinate $(\xiz_1,\dots, \xiz_n)\equiv (\xiz', \xiz_n)$ centred at $\xi$ so that 
$\{\xiz_n=0\} = T^{\C}_{\xi}(\bdy\OM)$ and such that a small open patch of $\bdy\OM$ around $\xi$
lies on the convex side of the surface $\{(\xiz', \xiz_n)\in B^{(n-1)}(0,r_0)\times \unitdisk \mid \iprt(\xiz_n)=\Phi(\xiz')\}$
(see \cite[Definition~1.5]{Bha_Comp_Geo}).
Now, for an arbitrary $\alpha>0$, let $\Psi_{\alpha}:[0,\infty) \to [0,\infty)$ be defined by
\[
  \Psi_{\alpha}(x) \defeq
  \begin{cases}
  \exp(-1/x^{\alpha}), &\text{if } x>0, \\
  0,                   &\text{if } x=0.
  \end{cases}   
\]
With these preparations, the result mentioned above is:

\begin{result}[Bharali {\cite[Theorem~1.4]{Bha_Comp_Geo}}] \label{R:Bha_Ext_Comp_Geo}
	Let $\OM \subset \C^n$ be a bounded convex domain with $\smoo^1$-smooth boundary. Suppose $\OM$ is supported
	from the outside by a function of the form $\Phi(z') \defeq \Psi_{\alpha}(\|z'\|)$, where $0 < \alpha <1$. Then every
	complex geodesic of $\OM$ extends continuously to $\overline{\unitdisk}$.	
\end{result}

The above result has recently been extended to certain convex domains with non-smooth boundaries; see
\cite[Theorem~1.7]{Bha_Zim_GoldDom}. The hypothesis of Result~\ref{R:Bha_Ext_Comp_Geo}
is such that it admits domains $\OM$
having boundary points that are not of finite type. As for the first four results in this section: their hypotheses \emph{manifestly}
cover the case where the domains involved have boundary points of infinite type. This is relevant because, by a result
of Zimmer \cite[Theorem~1.1]{Zim_GHyperbolicFinType}\,---\,given a bounded convex domain $\OM$ with
$\smoo^\infty$-smooth boundary and equipped with the Kobayashi distance $\koba_{\OM}$\,---\,if $\bdy\OM$ has
points of infinite type, then $(\OM, \koba_{\OM})$ is \emph{not} Gromov hyperbolic. Thus, \emph{not only} is
Theorem~\ref{T:contExtIsom} (as is Result~\ref{R:Zim_Cont_Ext_Isom} or Result~${\ref{R:Zim_Cont_Ext_Isom}}^{\!\prime}$)
a result involving domains with low boundary regularity, but it is one where $(\OM_1, \koba_{\OM_1})$,
$(\OM_2, \koba_{\OM_2})$ are not necessarily Gromov hyperbolic. I.e., a very natural condition under
which one may expect continuous extension to $\overline{\OM}_1$ of $\Omega_1 \to \Omega_2$
Kobayashi isometries is unavailable\,---\,and this work is an inquiry into what other kinds of hypotheses suffice.

\smallskip 

Result~\ref{R:Bha_Ext_Comp_Geo} and Result~\ref{R:Zim_Cont_Ext_Isom} both address the extension of complex geodesics
and have apparently similar hypotheses. But neither subsumes the other. Also note that in
Result~\ref{R:Bha_Ext_Comp_Geo} no constraints are placed on the way in which $\bdy \OM$ behaves in the
\emph{complex-normal} directions, but some degree of control is
required in the complex-tangential directions. This is in stark contrast to Result~\ref{R:Zim_Cont_Ext_Isom} (or
Result~${\ref{R:Zim_Cont_Ext_Isom}}^{\!\prime}$) and to our theorem. 
These together suggest the following

\begin{conjecture} \label{Cnj:Cplx_Geod}
	Let $\OM$ be a bounded convex domain that has $\smoo^1$-smooth boundary and is $\C$-strictly convex.
	Then every complex geodesic of
	$\OM$ extends continuously to $\overline{\unitdisk}$.
\end{conjecture}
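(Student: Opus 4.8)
The plan is to fix a complex geodesic $F:\unitdisk\to\OM$ and prove that its cluster set at every point of $\bdy\unitdisk$ is a single point; a continuous extension $\wt{F}:\overline{\unitdisk}\to\overline{\OM}$ then exists by the standard fact that, for a map into a compact set, continuous extendability is equivalent to every cluster set being a singleton (the set-valued map $\sigma\mapsto\mathrm{Cl}(F,\sigma)$ is upper semicontinuous, and a singleton-valued such map is continuous). As preliminaries, note that since $\OM$ is bounded, $F\in H^\infty(\unitdisk,\C^n)$; and since $F$ is a $\koba$-isometry while $\koba_{\unitdisk}(0,\zeta)\to\infty$ as $|\zeta|\to 1$, we get $\koba_{\OM}(F(0),F(\zeta))\to\infty$, whence $\distance(F(\zeta),\bdy\OM)\to 0$. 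Thus $\mathrm{Cl}(F,\sigma)$ is a nonempty compact connected subset of $\bdy\OM$ for each $\sigma$, and fixing $\sigma=1$ the task reduces to showing $\mathrm{Cl}(F,1)=\{p\}$ for a single $p\in\bdy\OM$.

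The engine of the argument is $\C$-strict convexity, which I would exploit through complex supporting functionals. For $p\in\bdy\OM$, $\smoo^1$-smoothness and convexity furnish the affine functional $h_p(z)\defeq\langle z-p,\nu_p\rangle$, $\C$-linear in $z$ (with $\nu_p$ the complex normal), satisfying $h_p(p)=0$, $\rprt h_p\leqslant 0$ on $\overline{\OM}$, and $h_p^{-1}(0)=p+T^{\C}_p(\bdy\OM)$; by $\C$-strict convexity, $h_p^{-1}(0)\cap\overline{\OM}=\{p\}$. The composite $\psi_p\defeq h_p\circ F:\unitdisk\to\{\rprt<0\}$ is then a bounded holomorphic map into a half-plane. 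Whenever $\zeta_k\to 1$ with $F(\zeta_k)\to p$ one has $\psi_p(\zeta_k)\to 0$, so $0\in\mathrm{Cl}(\psi_p,1)$; conversely, for any \emph{other} $q\in\mathrm{Cl}(F,1)$, the identity $h_p^{-1}(0)\cap\overline{\OM}=\{p\}$ forces $h_p(q)\neq 0$. Hence a nontrivial cluster set would make $\mathrm{Cl}(\psi_p,1)=h_p(\mathrm{Cl}(F,1))$ contain both $0$ and a nonzero value in $\{\rprt\leqslant 0\}$. Moreover, since $h_p:\OM\to\{\rprt<0\}$ is a holomorphic contraction, $\koba_{\OM}(F(0),F(\zeta))\geqslant\koba_{\{\rprt<0\}}(\psi_p(0),\psi_p(\zeta))$, and combining with $\koba_{\OM}(F(0),F(\zeta))=\koba_{\unitdisk}(0,\zeta)$ yields the lower bound $|\rprt\psi_p(\zeta)|\gtrsim 1-|\zeta|$, recording that $\psi_p$ cannot reach the imaginary axis faster than the Euclidean rate.

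I would first address the \emph{nontangential} part of the cluster set. Once one produces an asymptotic value of $F$ at $1$ (say along a radius), Lindel{\"o}f-type reasoning for the half-plane map $\psi_p$, together with the lower bound above and the Julia--Wolff--Carath{\'e}odory theory, should pin the nontangential cluster value of $\psi_p$ to $0$; feeding this through $h_p^{-1}(0)\cap\overline{\OM}=\{p\}$ then identifies the nontangential limit of $F$ at $1$ as a single point. The genuine difficulty — and the reason the statement remains a conjecture — is the \emph{tangential} part: a priori $\mathrm{Cl}(F,1)$ can be probed by tangential sequences $\zeta_k\to 1$, and nothing established so far prevents $\psi_p$ from returning near the imaginary axis along such sequences while still respecting $|\rprt\psi_p|\gtrsim 1-|\zeta|$.

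The missing ingredient is precisely the \emph{upper} bound on the Kobayashi distance that, in Theorem~\ref{T:contExtIsom}, is supplied by the Dini hypothesis through Proposition~\ref{P:Kob_dist_ub}; under the bare hypotheses of the conjecture this two-sided control is unavailable. To replace it I would attempt a rescaling argument at $1$: apply an anisotropic affine dilation of $\OM$ adapted to the normal direction at points of $\mathrm{Cl}(F,1)$, rescale $\unitdisk$ about $1$ correspondingly, and extract a normal limit $\widehat{F}$ mapping into a limiting convex model. The crux is to show that $\C$-strict convexity survives, \emph{uniformly in the boundary point}, into the limit model, so that $\widehat{F}$ is forced to be a geodesic of a half-plane-type model and is therefore rigid — collapsing the cluster set to a point. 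Establishing such a uniform modulus of $\C$-strict convexity from $\smoo^1$-smoothness alone — a purely qualitative surrogate for the quantitative Kobayashi upper bound — is, I expect, the main obstacle, and is where any proof of the conjecture must do genuinely new work.
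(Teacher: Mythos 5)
You have attempted to prove Conjecture~\ref{Cnj:Cplx_Geod}, which the paper does \emph{not} prove: it is stated explicitly as an open conjecture (``With the techniques currently known, this seems to be difficult to prove''), with Theorem~\ref{T:contExtIsom} offered only as evidence. So there is no paper proof to compare against, and your proposal --- as you yourself concede midway --- is a program rather than a proof. Your soft reductions are correct: the cluster sets $\mathrm{Cl}(F,\sigma)$ are nonempty compact connected subsets of $\bdy\OM$; for $p\in\bdy\OM$ the functional $h_p$ satisfies $h_p^{-1}(0)\cap\overline{\OM}=\{p\}$ by $\C$-strict convexity; and the contraction through the half-plane gives $|\rprt\psi_p(\zeta)|\gtrsim 1-|\zeta|$. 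But two of your steps are genuinely unavailable from the hypotheses. First, the Lindel{\"o}f/Julia--Wolff--Carath{\'e}odory step presupposes an asymptotic value of $F$ along a \emph{curve} ending at $1$; a bounded holomorphic map need not have one at a prescribed boundary point, merely knowing that $0$ lies in the radial cluster set of $\psi_p$ (a sequential statement) does not trigger Lindel{\"o}f, and nothing in ``bounded convex, $\smoo^1$, $\C$-strictly convex'' produces such a curve. Indeed, the example cited in the paper (\cite[Example~1.2]{Bha_Comp_Geo}, a $\smoo^{\infty}$-smoothly bounded convex domain with a non-extending geodesic) shows that soft function theory plus convexity alone cannot force landing; any proof must exploit $\C$-strict convexity \emph{quantitatively}, whereas your argument uses it only through the pointwise identity $h_p^{-1}(0)\cap\overline{\OM}=\{p\}$, which carries no modulus.

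Second, the rescaling step fails in the form stated. Under mere $\smoo^1$ regularity, $\C$-strict convexity is a purely pointwise condition with no uniform modulus, and it is \emph{not} preserved under local Hausdorff limits of anisotropic affine rescalings: limit models of convex domains can acquire nontrivial complex affine disks in their boundaries (this is exactly the degeneration occurring at infinite-type boundary points, where, as the paper notes via \cite[Theorem~1.1]{Zim_GHyperbolicFinType}, even Gromov hyperbolicity of $(\OM,\koba_{\OM})$ is lost), so the rigidity of the limit map $\widehat{F}$ is precisely what cannot be assumed --- extracting a ``uniform modulus of $\C$-strict convexity'' from the hypotheses would itself be the new theorem. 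Your diagnosis of \emph{where} the difficulty lies is nonetheless accurate and consistent with the paper's architecture: in Theorem~\ref{T:contExtIsom} the Dini control of the complex-normal derivative is what makes the inward normal rays $K$-almost-geodesics (Proposition~\ref{P:param_as_K-a-g}), yielding both the upper Kobayashi bound (Propositions~\ref{P:Kob_dist_ub} and \ref{P:Kob_dist_gen_ub}) and the Gromov-product dichotomy (Proposition~\ref{P:nice_doms_Grom_hyp_app}) that tames the tangential approach; absent that quantitative input, the tangential part of the cluster set is exactly the open problem, and your attempt should be recorded as a correct reduction plus an identified obstruction, not as a proof.
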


\noindent{With the techniques currently known, this seems to be difficult to prove. Theorem~\ref{T:contExtIsom} may
be seen as evidence
in support of this conjecture.}

\smallskip

Before closing this section, we must mention a recent result in a similar vein by Bracci--Gaussier--Zimmer
\cite[Corollary~1.6]{Bra_Gau_Zim_HomeoExtQ-ISom}. This result concerns the continuous extension of
$\OM_1\to \OM_2$ Kobayashi quasi-isometries that are homeomorphisms. While this result involves no
assumption on the boundary regularity of $\OM_1$ or $\OM_2$, necessarily $\dim(\OM_1) = \dim(\OM_2)$.
Furthermore $(\OM_1, \koba_{\OM_1})$ is required to be Gromov hyperbolic. Thus, in view of our remarks
above, \cite[Corollary~1.6]{Bra_Gau_Zim_HomeoExtQ-ISom} is quite different from Theorem~\ref{T:contExtIsom}.

\smallskip 

The plan of this paper is as follows: in Section~\ref{S:Tech_prelim}, we collect some preliminary results that
are not immediately related to Theorem~\ref{T:contExtIsom} but which will play a crucial role in its proof. In
Section~\ref{S:Some_facts_convex_domains}, we collect three relevant facts about convex domains in $\C^n$.
In Section~\ref{S:Ess_props}, we prove the propositions that enable Result~\ref{R:Zim_Cont_Ext_Isom} to be generalized to
Theorem~\ref{T:contExtIsom}. The result of Zimmer that we generalize, which leads to Theorem~\ref{T:contExtIsom}, is
\cite[Proposition~4.3]{Zim_CharDomLimAut}: our generalization is Proposition~\ref{P:param_as_K-a-g}. Finally, in
Section~\ref{S:proof_main_thm}, we provide the proof of Theorem~\ref{T:contExtIsom}. In all these sections, $\|\bcdot\|$ will
denote the Euclidean norm. 

\medskip

\section{Technical preliminaries} \label{S:Tech_prelim}
In this section we present some results that play a supporting role in the proofs of the main results in Section~\ref{S:Ess_props} and,
therefore, of our main theorem. The first result, by S.E. Warschawski, is the principal tool that enables us to deal with the
low regularity of $\bdy\OM_1$ and $\bdy\OM_2$ in Theorem~\ref{T:contExtIsom}.

\smallskip

To state this result, we need to fix some terminology. Given a rectifiable arc $\Gamma$ in $\C$, we say that
$\Gamma$ has a \emph{a continuously turning tangent} if there is a $\smoo^1$-smooth diffeomorphism
$\gamma: I\to \Gamma$, where $I$ is an interval. Note, in particular, that $\gamma'$ is non-vanishing.
Given that $\Gamma$ has a continuously turning tangent, a \emph{tangent angle} at any point
$\zeta \in \Gamma$ refers to the smaller of the two angles determined by the intersection of
$T_{\zeta}\Gamma$ with a \textbf{fixed} line $\ell$ in $\C$. While different choices of $\ell$ define
different tangent-angle functions on $\Gamma$, the difference between the tangent angles\,---\,determined
by some fixed $\ell$\,---\,at two points $\zeta_1, \zeta_2\in \Gamma$ depends \textbf{only} on $\zeta_1$ 
and $\zeta_2$ (and, of course, on $\Gamma$), i.e., is independent of $\ell$. For this reason, in the
following result\,---\,and in all applications of it\,---\,we shall use the phrase ``the tangent angle'' without
any further comment. If $\Gamma$ is a closed rectifiable Jordan curve in $\C$, analogous observations
can be made about arc length. With these words, we can now state the following:

\begin{result}[{\cite[Theorem~1]{Warschawski_DiffBdyConfMap}}]\label{R:Warsch_ext_bihol}
	Let $C$ be a closed rectifiable Jordan curve in $\C$ and let $C$ have a continuously turning tangent in a
	$C$-open neighbourhood of a point $\zeta_0\in C$. Suppose that the tangent angle $\tau(s)$
	as a function of arc length
	$s$ has a modulus of continuity $\omega$ at the point $s_0$ corresponding to $\zeta_0$\,---\,i.e.,
	there exists a constant $\sigma > 0$ such that
	\begin{equation}
	  |\tau(s)-\tau(s_0)| \leqslant \omega(|s-s_0|) \text{ whenever } |s-s_0| \leqslant \sigma
	\end{equation}
	---\,that satisfies the following condition: 
	\begin{equation}\label{E:Dini_integral}
	  \int_0^{\sigma} \frac{\omega(t)}{t} dt < \infty.
	\end{equation}
	Let $f$ be a biholomorphic map of $\unitdisk$ onto the region $D$ enclosed by $C$ and let
	$\zeta_0 = f(z_0)$. Then
	\[
	  \lim_{\unitdisk\ni z \to z_0} \frac{f(z)-f(z_0)}{z-z_0} \defines f'(z_0)
	\]
	exists, and
	\[
	  \lim_{S\ni z \to z_0} f'(z) = f'(z_0)
	\]
	for any Stolz angle $S$ with vertex at $z_0$. Furthermore, $f'(z_0) \neq 0$. 
\end{result}

We refer the reader to \cite[Chapter~1]{Pommeremke_BdyBehvConfMap} for a definition
of a Stolz angle in $\unitdisk$.

\begin{remark}
Note that, since the $\omega$ appearing in the above result is a modulus of continuity,
it is a non-decreasing function on $[0,\sigma]$ (and is continuous at $0$). For this reason,
the integrand in \eqref{E:Dini_integral} is Lebesgue measurable. Secondly, in the statement
of Result~\ref{R:Warsch_ext_bihol}, we have tacitly used Carath{\'e}odory's
theorem to conclude that\,---\,given that $C$ is rectifiable\,---\,the map
$f: \unitdisk\to D$ in the above result extends to a homeomorphism of $\overline{\unitdisk}$.
\end{remark}

The following is an immediate corollary to the above result.

\begin{corollary} \label{C:coro_Warsch}
	In the set-up described in Result~\ref{R:Warsch_ext_bihol}, if $g$ denotes $f^{-1}$ then
	\begin{equation}
	L \defeq \lim_{D\ni \zeta \to \zeta_0} \frac{g(\zeta)-z_0}{\zeta-\zeta_0}
	\end{equation}
	exists and is non-zero.
\end{corollary}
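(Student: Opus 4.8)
The plan is to deduce the existence and non-vanishing of the limit $L$ directly from the conclusions of Result~\ref{R:Warsch_ext_bihol} by unwinding the relationship $g = f^{-1}$. Since $C$ is rectifiable, Carath\'eodory's theorem guarantees that $f$ extends to a homeomorphism $\overline{\unitdisk} \to \overline{D}$, so $g = f^{-1}$ extends to a homeomorphism $\overline{D} \to \overline{\unitdisk}$; in particular $g(\zeta) \to z_0$ as $D \ni \zeta \to \zeta_0$, and $g(\zeta) \neq z_0$ for $\zeta \neq \zeta_0$ near $\zeta_0$ by injectivity. This is what makes the difference quotient defining $L$ well-posed.

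First I would record that Result~\ref{R:Warsch_ext_bihol} gives $f'(z_0) \neq 0$, where $f'(z_0)$ is the (angular) derivative obtained as the limit of $(f(z)-f(z_0))/(z-z_0)$ as $\unitdisk \ni z \to z_0$. The natural idea is then to substitute $z = g(\zeta)$ and invert. Concretely, for $\zeta \in D$ near $\zeta_0$ with $\zeta \neq \zeta_0$, write
\[
  \frac{g(\zeta)-z_0}{\zeta-\zeta_0} = \left( \frac{f(g(\zeta)) - f(z_0)}{g(\zeta)-z_0} \right)^{-1} = \left( \frac{\zeta - \zeta_0}{g(\zeta)-z_0} \right)^{-1},
\]
using $f(g(\zeta)) = \zeta$ and $f(z_0) = \zeta_0$. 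As $D \ni \zeta \to \zeta_0$, we have $\unitdisk \ni g(\zeta) \to z_0$ by the continuity of the extended $g$, so the difference quotient $(f(g(\zeta))-f(z_0))/(g(\zeta)-z_0)$ is a difference quotient for $f$ along a sequence of interior points approaching $z_0$. Hence I would set $L \defeq 1/f'(z_0)$, which is a well-defined nonzero complex number precisely because $f'(z_0) \neq 0$.

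The main obstacle is that the interior limit in Result~\ref{R:Warsch_ext_bihol} giving $f'(z_0)$ is an \emph{unrestricted} limit through $\unitdisk$, but to pass it through the substitution $z = g(\zeta)$ I only know that $g(\zeta) \to z_0$; I must check that this substitution produces a legitimate realization of that limit. Since the existence of $\lim_{\unitdisk\ni z\to z_0}(f(z)-f(z_0))/(z-z_0)$ is asserted as a full (unrestricted) limit over all of $\unitdisk$, any net $z = g(\zeta) \to z_0$ inside $\unitdisk$ (avoiding $z_0$, which holds by injectivity of $g$) automatically yields the same limiting value $f'(z_0)$. Thus no Stolz-angle restriction is needed for this step, and the reciprocal passes to the limit because $f'(z_0) \neq 0$ keeps the quantities bounded away from $0$ and $\infty$. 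I would conclude that $L = 1/f'(z_0)$ exists and is non-zero, which is exactly the claim.
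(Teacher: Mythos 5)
Your argument is correct and is precisely the ``immediate'' deduction the paper has in mind (the paper offers no written proof of Corollary~\ref{C:coro_Warsch}, labelling it an immediate consequence of Result~\ref{R:Warsch_ext_bihol}): invert the difference quotient via $z=g(\zeta)$, use that the limit defining $f'(z_0)$ is an unrestricted limit through $\unitdisk$ together with $g(\zeta)\to z_0$ and $g(\zeta)\neq z_0$, and conclude $L=1/f'(z_0)\neq 0$. No gaps.
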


We will also need the following simple lemma involving moduli of continuity.

\begin{lemma} \label{L:mod_of_cont_sub-add}
	Let $f$ be a real-valued function defined on a ball $B^{(n)}(a,r) \subset \C^n$. Then the modulus of continuity $\omega$ of $f$ on
	$B^{(n)}(a,r)$ is sub-additive, i.e., for all $s,t \in [0,2r)$ such that $s+t<2r$, $\omega(s+t) \leqslant \omega(s)+\omega(t)$.
\end{lemma}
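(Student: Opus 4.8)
The plan is to unwind the definition of the modulus of continuity and then exploit the convexity of the ball $B^{(n)}(a,r)$. Recall that the modulus of continuity of $f$ on $B^{(n)}(a,r)$ is given by
\[
  \omega(\delta) = \sup\{\, |f(x)-f(y)| : x,y \in B^{(n)}(a,r), \ \|x-y\| \leqslant \delta \,\},
\]
which is meaningful precisely for $\delta \in [0,2r)$, since $2r$ is the diameter of the ball. Fix $s,t \in [0,2r)$ with $s+t<2r$; note that the hypothesis guarantees that each of $\omega(s)$, $\omega(t)$, and $\omega(s+t)$ is legitimately defined. The strategy is to prove that for \emph{every} pair $x,y \in B^{(n)}(a,r)$ with $\|x-y\| \leqslant s+t$ one has $|f(x)-f(y)| \leqslant \omega(s)+\omega(t)$; taking the supremum over all such pairs then yields $\omega(s+t) \leqslant \omega(s)+\omega(t)$, as required.

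To establish the pointwise estimate for a fixed pair $x,y$, I would interpose a suitable point on the segment joining them. Concretely, let $z$ be the point on the line segment $[x,y]$ with $\|x-z\| = \min(s,\|x-y\|)$. Because $B^{(n)}(a,r)$ is convex, $z \in B^{(n)}(a,r)$, so that both $|f(x)-f(z)|$ and $|f(z)-f(y)|$ are controlled by values of $\omega$. By construction $\|x-z\| = \min(s,\|x-y\|) \leqslant s$, while, using that $z$ lies between $x$ and $y$,
\[
  \|z-y\| = \|x-y\| - \|x-z\| = \max\bigl(\|x-y\|-s,\,0\bigr) \leqslant \max\bigl((s+t)-s,\,0\bigr) = t.
\]
Hence $|f(x)-f(z)| \leqslant \omega(s)$ and $|f(z)-f(y)| \leqslant \omega(t)$ directly from the definition of $\omega$, and the triangle inequality $|f(x)-f(y)| \leqslant |f(x)-f(z)| + |f(z)-f(y)|$ delivers $|f(x)-f(y)| \leqslant \omega(s)+\omega(t)$.

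There is no genuine obstacle in this argument: the only point that requires attention is that the interposed point $z$ must remain inside $B^{(n)}(a,r)$, which is exactly what convexity of the ball guarantees, and that every invocation of $\omega$ be at an argument in $[0,2r)$, which is ensured by the constraint $s+t<2r$. Taking the supremum over all pairs $x,y \in B^{(n)}(a,r)$ with $\|x-y\| \leqslant s+t$ completes the proof.
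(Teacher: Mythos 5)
Your proof is correct and follows essentially the same route as the paper's: interpose a point on the segment from $x$ to $y$ at distance $\min(s,\|x-y\|)$ from $x$ (the paper splits this into the two cases $\|x-y\|\leqslant s$ and $\|x-y\|>s$), invoke convexity of the ball, and apply the triangle inequality. The only cosmetic difference is that your use of $\min$ and $\max$ merges the paper's two cases into one formula.
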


\begin{proof}
	Suppose $x,y \in B^{(n)}(a,r)$ and $\|x-y\| \leqslant s+t$. If $\|x-y\| \leqslant s$, then $|f(x)-f(y)| \leqslant \omega(s) \leqslant\
	\omega(s)+\omega(t)$. Now suppose that $\|x-y\| > s$. Note that
	\[
	\left|f(x)-f\left(x+s\frac{y-x}{\|y-x\|}\right)\right| \leqslant \omega(s)
	\]
	and
	\begin{align*}
	\left|f\left(x+s\frac{y-x}{\|y-x\|}\right)-f(y)\right| &=
	\left|f\left(x+s\frac{y-x}{\|y-x\|}\right)-f\left(x+\|y-x\|\frac{y-x}{\|y-x\|}\right)\right| \\
	&\leqslant \omega(\|y-x\|-s) \leqslant \omega(t).
	\end{align*}
	Consequently, by the triangle inequality, $|f(y)-f(x)| \leqslant \omega(s)+\omega(t)$. Since $x$ and $y$ were arbitrary
	points in $B^{(n)}(a,r)$ satisfying $\|x-y\| \leqslant s+t$, it follows
	that $\omega(s+t) \leqslant \omega(s)+\omega(t)$.
\end{proof}

\smallskip

\section{Some facts about convex domains} \label{S:Some_facts_convex_domains}
In this section we record some facts about convex domains in $\C^n$. The first two were proved by Zimmer in \cite{Zim_CharDomLimAut}. All
of them are needed in the proof of Theorem~\ref{T:contExtIsom}. The first result concerns a lower bound for the Kobayashi distance on
arbitrary convex domains. First, some notation: in what follows, given a domain $\OM \subset \C^n$, $\koba_{\OM}$ will denote the
Kobayashi pseudodistance on $\OM$ and $\dkoba_{\OM}$ will denote the Kobayashi pseudometric on $\OM$.

\begin{result}[{\cite[Lemma~4.2]{Zim_CharDomLimAut}}] \label{R:lb_Kob_dist_hyp}
	Let $\OM \varsubsetneq \C^n$ be a convex domain and $H \subset \C^n$ be a complex affine hyperplane such that $\OM \cap H =
	\varnothing$. Then, for every $z_1,z_2 \in \OM$,
	\begin{equation}
	\koba_{\OM}(z_1,z_2) \geqslant \frac{1}{2} \left| \log\left( \frac{\distance(z_1,H)}{\distance(z_2,H)} \right) \right|.
	\end{equation} 
\end{result}

The next result is a sharper lower bound for the Kobayashi distance between a pair of points in a bounded convex domain with
$\smoo^1$-smooth boundary under an additional assumption. At this point, we wish to state a key clarification about our notation. 
Whenever $\OM \subset \C^n$ is a $\smoo^1$-smoothly bounded domain and $\xi \in \bdy \OM$, $(\xi+T^{\C}_{\xi}(\bdy \OM))$ will be
understood to be a certain \emph{set} in $\C^n$. $T_{\xi}(\bdy \OM)$ will denote the real tangent space to $\bdy \OM$ at $\xi$ viewed
\emph{extrinsically}: i.e., as a real hyperplane in $\R^{2n} \equiv \C^n$ taking into account that $\bdy \OM$ is $\smoo^1$-smoothly
embedded in $\C^n$. Then,
\[
T^{\C}_{\xi}(\bdy \OM) \defeq T_{\xi}(\bdy \OM) \cap i T_{\xi}(\bdy \OM),
\]
with $T_{\xi}(\bdy \OM)$ being viewed extrinsically.

\begin{result}[{\cite[Lemma~4.5]{Zim_CharDomLimAut}}] \label{R:lb_Kob_dist_diff_tanspcs}
	Let $\OM \subset \C^n$ be a bounded convex domain with $\smoo^1$-smooth boundary. Let $\xi,\xi' \in \bdy \OM$ and suppose that 
	$\xi+T^{\C}_{\xi}(\bdy \OM) \neq \xi'+T^{\C}_{\xi'}(\bdy \OM)$. Then there exist constants $\epsilon, C > 0$ such that for every $p
	\in \OM$ with $\distance(p,\xi+T^{\C}_{\xi}(\bdy \OM)) \leqslant \epsilon$ and every $q \in \OM$ with
	$\distance(q,\xi'+T^{\C}_{\xi'}(\bdy \OM)) \leqslant \epsilon$,
	\begin{equation}
	\koba_{\OM}(p,q) \geqslant \frac{1}{2}\log\left( \frac{1}{\dtb{\OM}(p)} \right) + \frac{1}{2} \log\left( \frac{1}{\dtb{\OM}(q)}
	\right) - C.
	\end{equation}
\end{result}

\noindent{Here, for any $z \in \OM$, $\dtb{\OM}(z) \defeq \distance(z,\OM^{\cmpl})$.}

\smallskip

The following result provides bounds for the Kobayashi \emph{metric} on convex domains.

\begin{result}[Graham {\cite[Theorem~3]{Graham90}}, also see {\cite{Graham91}}] \label{R:Grahams_result}
	Let $\OM \subset \C^n$ be a convex domain. Given $p \in \OM$ and $v \in T_p^{(1,0)}\OM$,
	we let $r_{\OM}(p,v)$ denote the supremum of the radii of the disks centred at $p$, tangent to $v$, and included in $\OM$. Then
	\begin{equation} \label{E:Grahams_bounds}
	\frac{\|v\|}{2 r_{\OM}(p,v)} \leqslant \dkoba_{\OM}(p,v) \leqslant \frac{\|v\|}{r_{\OM}(p,v)}.
	\end{equation} 	
\end{result}

\smallskip

\section{Essential Propositions} \label{S:Ess_props}

The goal of this section is to prove certain technical results that are essential for extending the scope of an idea
in \cite{Zim_CharDomLimAut} to the sorts of domains considered in Theorem~\ref{T:contExtIsom}. Specifically: that
inward-pointing normals can be parametrized as $K$-almost-geodesics for some $K \geqslant 1$. 
In \cite{Zim_CharDomLimAut}, this relies on a construction by Forstneric--Rosay in
\cite[Proposition~2.5]{Forst_Ros_LocKobMet} for estimating
effectively the Kobayashi distance close to the boundary of a domain $\OM$ whose
boundary is of class $\smoo^{1,\alpha}$.

\begin{definition}[Zimmer {\cite[Definition~3.2]{Zim_CharDomLimAut}}]
	Let $\OM \subset \C^n$ be a bounded domain. For $K \geqslant 1$, by a \emph{$K$-almost geodesic} in $\OM$
	(with respect to the Kobayashi distance) we mean a mapping $\sigma: I \to \OM$, where $I$ is an interval in $\R$,
	such that
	\begin{enumerate}
		\item $|s-t|-\log(K) \leqslant \koba_{\OM}(\sigma(s),\sigma(t)) \leqslant |s-t|+\log(K) \quad \forall \, s,t \in I$, and
		\item $\koba_{\OM}(\sigma(s),\sigma(t)) \leqslant K|s-t| \quad \forall \, s,t \in I$. 
	\end{enumerate}	
\end{definition}

The Forstneric--Rosay estimate involves embedding a certain compact planar set
$\overline{\mathcal{D}}$ with $0\in \bdy\mathcal{D}$ into $\OM$ so that its image osculates $\bdy\OM$
at the image of $0$. Since the domains considered in Theorem~\ref{T:contExtIsom} need not necessarily have
boundaries of class $\smoo^{1,\alpha}$, we must modify \emph{significantly} the constructions in
\cite[Proposition~2.5]{Forst_Ros_LocKobMet}, starting with a class of planar domains better adapted to
the domains $\OM_1$ and $\OM_2$ of Theorem~\ref{T:contExtIsom}.
\smallskip

Such a domain (which must contain $0$ in its boundary) must have a defining function that is $\smoo^1$ near $0$ whose derivative (while
not necessarily $\alpha$-H{\"o}lder-continuous for any $\alpha \in (0,1)$) will have a modulus of continuity that satisfies a Dini
condition. To this end, with $\omega$ as in Theorem~\ref{T:contExtIsom}, we define the function $h : (-2r, 2r)\to [0, \infty)$ as 
follows:
\[
  h(t) \defeq \begin{cases}
  			\,{\displaystyle \int_t^0} \omega(-y) dy, &\text{if } t<0, \\
  			{} & {} \\
  			\,{\displaystyle \int_0^t} \omega(y)  dy, &\text{if } t\geqslant 0.
  \end{cases}
\]
The following properties of $h$ are easily verified: $h(0)=0$; $h$ is strictly increasing on $[0,2r)$, and strictly
decreasing on $(-2r,0]$; and $h'(0)=0$. Then, for $\alpha,\tau>0$, consider the domain
\[
\geodModDomB \defeq \{\zeta = s+it \in \C \mid |t| < \tau, \ \alpha h(t) < s < \tau\}.
\]
The following property of the domains $\geodModDomB$ is obvious from the definition: if $\zeta = s+it \in \geodModDomB$,
then $|t| \leqslant h^{-1}(s/\alpha)$. Near $0$, a defining function for $\geodModDomB$ is
$\varrho(s,t) \defeq \alpha h(t) -s$. Its total derivative at the point $(s,t)$, 
$(D\varrho)(s,t)$, with respect to the standard basis of $\R^2$, is
\[
\big[\!-\!1 \;\, \alpha h'(t) \big].
\]
It is easily checked that the modulus of continuity of $D\varrho$ at $0$ is $\alpha \omega$: i.e., for
every $t \in (-2r,2r)$, $\|(D\varrho)(s,t)-(D\varrho)(0)\| = \alpha \omega(|t|)$. 

\smallskip

We will use the following fact in our proof below: if $w \in \C^n$, then $\multByi(w)$ is orthogonal to $w$ with respect
to the standard real inner product on $\R^{2n} \longleftrightarrow \C^n$. With this remark, we now state
and prove the following proposition.

\begin{proposition} \label{P:emb_mod_dom}
	Let $\Omega \subset \C^n$ be a bounded convex domain having the properties common to
	$\OM_1$ and $\OM_2$ as stated in Theorem~\ref{T:contExtIsom}.
	For
	$\xi \in \bdy \Omega$, let $\Psi_{\xi} : \C \to \C^n$ denote the $\C$-affine map
	\[
	\Psi_{\xi}(\zeta) \defeq \xi + \zeta \eta^{\xi} \quad \forall \, \zeta \in \C.
	\]
	Then there exist constants $\alpha,\tau > 0$ such that, for every $\xi \in \bdy \Omega$, $\Psi_{\xi}(\geodModDomB)
	\subset \Omega$. 
\end{proposition}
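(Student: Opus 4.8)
The plan is to reduce the inclusion $\Psi_{\xi}(\geodModDomB)\subset\Omega$ to the pointwise inequality $\rho\circ\Psi_{\xi}<0$ on $\geodModDomB$. Since every point $\xi+s\eta^{\xi}+t\multByi(\eta^{\xi})$ with $|s|,|t|$ small lies within a fixed small distance of $\bdy\Omega$, where $\rho$ is a genuine defining function (normalize so that $\Omega=\{\rho<0\}$ locally), the inequality $\rho<0$ there is exactly membership in $\Omega$. So I fix $\xi\in\bdy\Omega$ and set $g(s,t):=\rho(\xi+s\eta^{\xi}+t\multByi(\eta^{\xi}))$; note $\Psi_{\xi}(s+it)=\xi+s\eta^{\xi}+t\multByi(\eta^{\xi})$ because $\multByi(\eta^{\xi})=i\eta^{\xi}$. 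Two observations drive the estimate. First, $\nabla\rho(\xi)$ is parallel to $\eta^{\xi}$ (indeed $\nabla\rho(\xi)=-\|\nabla\rho(\xi)\|\,\eta^{\xi}$), so the remark $\multByi(\eta^{\xi})\perp\eta^{\xi}$ gives $D_{\multByi(\eta^{\xi})}\rho(\xi)=0$; the modulus-of-continuity hypothesis then upgrades to the genuine bound $|D_{\multByi(\eta^{\xi})}\rho(x)|\le\omega(\|x-\xi\|)$ on $B^{(n)}(\xi,r)$. Second, $\partial_{s}g(0,0)=D_{\eta^{\xi}}\rho(\xi)=-\|\nabla\rho(\xi)\|<0$.

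Next I would estimate $g$ by the fundamental theorem of calculus in each coordinate separately, writing $g(s,t)=g(0,t)+\int_{0}^{s}\partial_{s}g(u,t)\,du$. The first term is controlled by the vanishing of the complex-normal derivative at $\xi$:
\[
  |g(0,t)|=\left|\int_{0}^{t}D_{\multByi(\eta^{\xi})}\rho\big(\xi+u\,\multByi(\eta^{\xi})\big)\,du\right|\le\int_{0}^{|t|}\omega(u)\,du=h(t),
\]
which is exactly why $h$ is taken to be the primitive of $\omega$. For the integral term, provided $\sqrt{u^{2}+t^{2}}$ is small enough that $\partial_{s}g\le-c$ for some uniform $c>0$, one gets $\int_{0}^{s}\partial_{s}g(u,t)\,du\le-cs$. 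Combining, $g(s,t)\le h(t)-cs$; and on $\geodModDomB$ we have $s>\alpha h(t)$, so $g(s,t)<(1-c\alpha)h(t)\le0$ as soon as $\alpha>1/c$ (the degenerate case $t=0$, where $h(t)=0$, being immediate from $g(s,0)\le-cs<0$).

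The real work, and the step I expect to require the most care, is making every constant uniform in $\xi\in\bdy\Omega$. Here I would invoke compactness of $\bdy\Omega$ together with the (uniform) continuity of $\nabla\rho$ on a compact neighborhood of $\bdy\Omega$: this yields $c:=\tfrac12\min_{\bdy\Omega}\|\nabla\rho\|>0$ and a $\delta\in(0,r)$ such that $\langle\nabla\rho(x),\eta^{\xi}\rangle\le-c$ whenever $\|x-\xi\|<\delta$, which is precisely the bound $\partial_{s}g\le-c$ used above. I would then set $\alpha:=2/c$ and pick $\tau>0$ so small that $\sqrt{2}\,\tau<\delta$; this guarantees that every point $\xi+s\eta^{\xi}+t\multByi(\eta^{\xi})$ with $|s|,|t|<\tau$ lies in $B^{(n)}(\xi,\delta)$, where both derivative estimates are valid, and (shrinking $\tau$ further if necessary) in the neighborhood of $\bdy\Omega$ on which $\rho$ defines $\Omega$. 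With these uniform choices, $g<0$ throughout $\geodModDomB$ for every $\xi$, giving $\Psi_{\xi}(\geodModDomB)\subset\Omega$.

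A final remark: this route uses neither convexity nor the subadditivity of $\omega$ from Lemma~\ref{L:mod_of_cont_sub-add}. Those enter an alternative argument that instead bounds the boundary curve $s=\phi(t)$ of the planar convex set $\Psi_{\xi}^{-1}(\Omega)$: differentiating $g(\phi(t),t)=0$ gives $|\phi'(t)|\le\omega(\sqrt{\phi(t)^{2}+t^{2}})/c$, and integrating after the subadditive bound $\omega(\sqrt{\phi^{2}+t^{2}})\le\omega(\phi)+\omega(t)$ together with a short bootstrap ($\phi(t)\le|t|$) recovers $\phi(t)\le\alpha h(t)$. The direct estimate above seems the cleaner of the two.
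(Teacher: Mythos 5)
Your proof is correct, and it takes a genuinely different route from the paper's. The paper applies Taylor's theorem with integral remainder along the single straight segment from $\xi$ to $\xi+\zeta\eta^{\xi}$ (see \eqref{E:xprsn_rho_two_intgrls}); this forces it to estimate the complex-normal contribution in the form $|t|\int_0^1\omega(x|\zeta|)\,dx$, and then to convert that into $2s/\alpha$ by combining the bound $|\zeta|\leqslant\sqrt{2}\,h^{-1}(s/\alpha)$ (valid only after shrinking $\tau$ so that \eqref{E:condition_s_h-inverse_s} holds), the sub-additivity of $\omega$ from Lemma~\ref{L:mod_of_cont_sub-add} (to absorb the factor $2$ inside $\omega$), and a change of variables recognizing $h$ as the primitive of $\omega$. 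You instead integrate along the L-shaped path $(0,0)\to(0,t)\to(s,t)$: the leg in the $\multByi(\eta^{\xi})$-direction yields $|g(0,t)|\leqslant h(t)$ \emph{directly} from $D_{\multByi(\eta^{\xi})}\rho(\xi)=0$ and the modulus-of-continuity hypothesis, and the leg in the $\eta^{\xi}$-direction yields $-cs$ from the uniform lower bound on $\langle\nabla\rho,\eta^{\xi}\rangle$ near $\xi$; the defining inequality $s>\alpha h(t)$ of $\geodModDomB$ then finishes the argument once $\alpha>1/c$. Both arguments use the same uniformization devices (compactness of $\bdy\OM$ to get $m=\min\|\nabla\rho\|$, uniform continuity of $D\rho$ to get $\delta$, and $\sqrt{2}\,\tau<\delta$), and neither uses convexity at this stage. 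What your decomposition buys is the elimination of Lemma~\ref{L:mod_of_cont_sub-add}, of the manipulations with $h^{-1}$, and of the extra shrinking of $\tau$; it is arguably the cleaner proof. (Your closing speculation about where sub-additivity enters is slightly off: the paper uses it inside the Taylor-remainder estimate as described above, not in a boundary-curve ODE argument.)
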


\begin{proof}
We are given a $\smoo^1$ defining function $\rho$ defined on a neighbourhood $U$ of $\bdy \Omega$ and we are given an $r>0$
such that, for every $\xi \in \bdy \Omega$, the directional derivative $D_{\multByi\left( \eta^{\xi} \right)}\rho$ has on
$B^{(n)}(\xi,r)$ modulus of continuity $\omega$. 
We shall identify $\mathscr{L}(\R^{2n},\R)$ with $\R^{2n}$ via the matrix representation of the elements of
$\mathscr{L}(\R^{2n},\R)$ relative to the standard basis of $\R^{2n}$.
Since $D\rho$ does not vanish on $\bdy \Omega$, there is an $m>0$ such that, for every $\xi \in \bdy \Omega$,
$\|(D\rho)(\xi)\| \geqslant m$. Furthermore, if we choose a neighbourhood $V$ of $\bdy \Omega$ in $U$ such that
$\overline{V}$ is a compact subset of $U$, then $D\rho$ is uniformly continuous on $V$. In particular, there is a
$\delta_0$, $0<\delta_0<r$, such that
\begin{equation} \label{E:D_rho_diff}
  \|(D\rho)(\xi)-(D\rho)(\xi')\| \leqslant m/2 \quad \forall \, \xi,\xi' \in V \text{ such that } \|\xi-\xi'\| \leqslant
  \delta_0. 
\end{equation}
Choose $\tau > 0$ so small that $\sqrt{2}\tau < \delta_0$; it then follows that 
\[
  \{s+it \in \C \mid |t|<\tau, \ 0<s<\tau\} \subset D(0,\delta_0).
\]
Then, for any $\alpha > 0$, $\geodModDomB \subset D(0,\delta_0)$.
We fix a value of $\alpha \geqslant 1$ so large that $2/\alpha \leqslant m/4$. We shall
soon see the reason for this choice. We may also need to shrink $\tau$ further.
The precise value of $\tau$ that works will be presented below. 

For the rest of the proof we fix
$\xi \in \bdy \Omega$ and $\zeta \in \geodModDomB$. In what follows, $a+ib$ 
($a, b\in \R^n$) will, for simplicity of notation, denote either a complex vector or the vector
$(a_1, b_1,\dots, a_n, b_n)\in \R^{2n}$\,---\,the intended meaning being clear from the context. 
By Taylor's theorem, and writing $\zeta = s+it$ (and $\langle\bcdot\,, \bcdot\rangle$ denoting the
usual inner product on $\R^{2n}$)
\begin{align}
  \rho(\xi + \zeta \eta^{\xi}) =&\ \rho(\xi) + (D\rho)(\xi)(\zeta\eta^{\xi})
    + \int_0^1 \big((D\rho)(\xi+x\zeta\eta^{\xi})-(D\rho)(\xi) \big)(\zeta\eta^{\xi}) dx \notag \\
  =&\ s\langle \nabla\rho(\xi), \eta^{\xi}\rangle + t \langle\nabla\rho(\xi), \multByi(\eta^{\xi})\rangle
      + s \int_0^1 \big((D\rho)(\xi+x\zeta\eta^{\xi})-(D\rho)(\xi) \big)(\eta^{\xi}) dx \notag \\
      &+ t \int_0^1 \big( (D\rho)(\xi+x\zeta\eta^{\xi})-(D\rho)(\xi) \big)(\multByi(\eta^{\xi})) dx \quad
      \text{[since $\rho(\xi)=0$]} \notag \\
  \leqslant&\ -sm + s \int_0^1 \big| ((D\rho)(\xi+x\zeta\eta^{\xi}) - (D\rho)(\xi))(\eta^{\xi}) \big| dx \notag \\
    &+ |t| \int_0^1 \big| \big( D_{\multByi(\eta^{\xi})}\rho \big)(\xi+x\zeta\eta^{\xi}) - \big( D_{\multByi(\eta^{\xi})}\rho
    \big)(\xi) \big| dx  \quad 
    \text{[since $\multByi(\eta^{\xi}) \perp \nabla\rho(\xi)$].}    \label{E:xprsn_rho_two_intgrls}                  
\end{align}
Since, for every $\xi \in \bdy \Omega$, every $x \in [0,1]$ and every $\zeta \in \geodModDomB$,
$\|(\xi+x\zeta\eta^{\xi})-\xi\| = x |\zeta| < \delta_0$,
\[
  \|(D\rho)(\xi+x\zeta\eta^{\xi})-(D\rho)(\xi)\| \leqslant \frac{m}{2},
\]
by \eqref{E:D_rho_diff}.
Therefore the second term on the right hand side of \eqref{E:xprsn_rho_two_intgrls} is less than or equal to $sm/2$.
As for the third term, note that for every $x \in [0,1]$,
\[
  \big| \big( D_{\multByi\left( \eta^{\xi} \right)}\rho \big)(\xi+x\zeta\eta^{\xi}) - \big( D_{\multByi\left( \eta^{\xi}
  \right)}\rho \big)(\xi) \big| \leqslant \omega(\|(\xi+x\zeta\eta^{\xi})-\xi\|) = \omega(x |\zeta|).
\]
So the third term on the right hand side of \eqref{E:xprsn_rho_two_intgrls} is less than or equal to
\[
  |t| \int_0^1 \omega(x |\zeta|) dx.
\] 
Therefore we get, from \eqref{E:xprsn_rho_two_intgrls},
\begin{equation} \label{E:xprsn_rho_only_omega}
  \rho(\xi + \zeta\eta^{\xi}) \leqslant -sm + (sm/2) + |t| \int_0^1 \omega(x |\zeta|) dx.
\end{equation}
Since $\zeta \in \geodModDomB$,
\begin{equation} \label{E:xprsn_mod_lambda}
  |\zeta|
  \leqslant \big( s^2 +  (h^{-1}(s/\alpha))^2 \big)^{1/2}
  = h^{-1}( s/\alpha ) \left( 1 + \left( \frac{s}{h^{-1}(s/\alpha)} \right)^{\!\!2} \right)^{1/2}.
\end{equation}
Since $h'(0) = 0$, we have $\lim_{x \to 0+} x/h^{-1}(x) =0$. Therefore, we can
shrink $\tau$ so that, 
\begin{equation} \label{E:condition_s_h-inverse_s}
  x/h^{-1}(x) \leqslant 1/\alpha \quad \forall x\in  (0,\tau).
\end{equation}
Now, from \eqref{E:xprsn_mod_lambda}, the fact that
$0 < s/\alpha < \tau$ (since $s/\alpha \leqslant s$ by our choice of $\alpha$), and
\eqref{E:condition_s_h-inverse_s}, we have:
\[ 
  1 + \left( \frac{s}{h^{-1}(s/\alpha)} \right)^{\!\!2} =
  1 + \alpha^2 \left( \frac{s/\alpha}{h^{-1}(s/\alpha)} \right)^{\!\!2} \leqslant 2.
\]
From the last inequality and \eqref{E:xprsn_mod_lambda},
\[
  |\zeta| \leqslant \sqrt{2}h^{-1}(s/\alpha).
\]
Using the above in \eqref{E:xprsn_rho_only_omega} we get that
\begin{align*}
  \rho(\xi + \zeta \eta^{\xi}) &\leqslant -(sm/2) +
  |t| \int_0^1 \omega\left( \sqrt{2} x h^{-1}(s/\alpha) \right) dx \notag \\
  &\leqslant -(sm/2) + h^{-1}(s/\alpha) \int_0^1 \omega\left( 2 x h^{-1}(s/\alpha) \right) dx \notag \\
  &\leqslant s \left( -\frac{m}{2} + \frac{2h^{-1}(s/\alpha)}{s} \int_0^1 \omega\left( x h^{-1}(s/\alpha) \right) dx
  \right) \notag \\
  &= s \left( -\frac{m}{2} + \frac{2}{s} \int_0^{h^{-1}(s/\alpha)} \omega(u) du\right)
       \quad \text{[by change of variables]} \notag \\
  &= s \left( -\frac{m}{2} + \frac{2}{\alpha} \right) \leqslant -\frac{sm}{4} < 0,
\end{align*}
by the choice of $\alpha$ discussed above. We note here that the third inequality follows
from Lemma~\ref{L:mod_of_cont_sub-add}.
Therefore,
$\xi+\zeta \eta^{\xi} \in \Omega$. Since $\xi \in \bdy \OM$ and $\zeta \in \geodModDomB$ were arbitrary, the proof is complete. 
\end{proof}

The proof of Theorem~\ref{T:contExtIsom}\,---\,as we shall see\,---\,relies crucially on the conclusion
of Result~\ref{R:Warsch_ext_bihol}, for the point $z_0=1$, when applied to the domains $\geodModDomB$.
We must therefore verify that the hypotheses of that result hold for $\geodModDomB$. 
It is enough to show that the modulus of continuity of the tangent angle to $\bdy \geodModDomB$ near $0$, regarded as a
function of arc length, satisfies a Dini condition. Before we do this, we note the following elementary fact:
\begin{equation} \label{E:tan_cond}
|\tan^{-1}(x)| \leqslant |x| \quad \forall \, x \in \R.
\end{equation}
We also note that given $\alpha$ and $\tau$, a parametrization of $\bdy \geodModDomB$ near $0$ is given by
$\Phi \defeq y \mapsto (\alpha h(y), y) : [-\epsilon_0,\epsilon_0] \to \R^2$, where $\epsilon_0$ is a suitably small
positive quantity depending on $\alpha$ and $\tau$. Therefore the tangent angle to $\bdy \geodModDomB$
near $0$, as a function of $y$, is
\begin{equation} \label{E:tangent_angle_terms_of_y}
\widehat{\theta}(y) = \tan^{-1}(\alpha h'(y)) \quad \forall \, y \in [-\epsilon_0,\epsilon_0].
\end{equation}
(In this instance, the line $\ell$, as introduced in the explanations preceding Result~\ref{R:Warsch_ext_bihol},
is the imaginary axis of $\C$.)
Now we present the following lemma.

\begin{lemma} \label{L:mod_dom_sat_Warsch_con}
	The tangent angle $\theta$ of $\bdy \geodModDomB$ near $0$, regarded as a function of arc length, has a modulus of continuity that
	is dominated by $\alpha \omega$ (and therefore satisfies a Dini condition).	
\end{lemma}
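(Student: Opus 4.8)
The plan is to exploit the fact that, for the purpose of applying Result~\ref{R:Warsch_ext_bihol} at the point $z_0 = 1$ (equivalently at $\zeta_0 = 0 \in \bdy\geodModDomB$), what is needed is control of the modulus of continuity of the tangent angle \emph{at} the arc-length parameter $s_0$ corresponding to $0$; that is, a bound on $|\theta(s) - \theta(s_0)|$ in terms of $|s - s_0|$. Since the parametrization $\Phi \colon y \mapsto (\alpha h(y), y)$ sends $y = 0$ to the origin, \eqref{E:tangent_angle_terms_of_y} gives $\theta(s_0) = \widehat{\theta}(0) = \tan^{-1}(\alpha h'(0)) = 0$, so it suffices to bound $|\widehat{\theta}(y)|$ suitably.

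First I would compute $h'$ explicitly from the definition of $h$: for $y \geqslant 0$ one has $h'(y) = \omega(y)$, while for $y < 0$ one has $h'(y) = -\omega(-y)$, so that in either case $|h'(y)| = \omega(|y|)$. Combining this with \eqref{E:tangent_angle_terms_of_y} and the elementary inequality \eqref{E:tan_cond} yields, for every $y \in [-\epsilon_0, \epsilon_0]$,
\[
  |\widehat{\theta}(y)| = \big|\tan^{-1}(\alpha h'(y))\big| \leqslant \alpha\,|h'(y)| = \alpha\,\omega(|y|).
\]

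Next I would pass from the parameter $y$ to arc length. Writing $s(y)$ for the arc length of $\Phi$ measured from $y = 0$, we have $s'(y) = \|\Phi'(y)\| = \sqrt{1 + (\alpha h'(y))^2} \geqslant 1$, so $s$ is a strictly increasing $\smoo^1$-function of $y$ satisfying $|s(y) - s_0| \geqslant |y|$. Since a modulus of continuity $\omega$ is non-decreasing, this gives $\omega(|y|) \leqslant \omega(|s(y) - s_0|)$. Therefore, regarding $\theta$ as a function of arc length via the inverse of $s(\bcdot)$ and using $\theta(s_0) = 0$,
\[
  |\theta(s) - \theta(s_0)| \leqslant \alpha\,\omega(|y|) \leqslant \alpha\,\omega(|s - s_0|),
\]
which is exactly the assertion; the Dini condition for $\alpha\omega$ is then immediate, since it holds for $\omega$ by the hypothesis of Theorem~\ref{T:contExtIsom}.

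The computations here are all routine; the only point requiring any care is the passage between the parameter $y$ and arc length, where the bound $|y| \leqslant |s - s_0|$\,---\,coming from the speed being at least $1$\,---\,is precisely what allows the monotonicity of $\omega$ to absorb the change of variable without introducing any factor worse than $\alpha$.
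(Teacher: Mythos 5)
Your proof is correct and follows essentially the same route as the paper's: both arguments reduce to bounding $|\widehat{\theta}(y)| = |\tan^{-1}(\alpha h'(y))| \leqslant \alpha\,\omega(|y|)$ via \eqref{E:tan_cond} and then use the fact that the arc-length function has speed at least $1$ (the paper's \eqref{E:G_majorant}--\eqref{E:G_inverse_majorant}) together with the monotonicity of $\omega$ to convert this into a bound in terms of $|s - s_0|$. The only cosmetic difference is that the paper writes the arc-length function explicitly as $G$ and works with $G^{-1}$, whereas you phrase the same estimate as $|y| \leqslant |s(y) - s_0|$.
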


\begin{proof}
First we determine the arc length as a function of $y$. We will reckon the (signed) arc length $s$ from $0$ and
such that $s(x+iy) < 0$ for $x+iy\in \bdy \geodModDomB$ and $y<0$, and 
$s(x+iy) > 0$ for $x+iy\in \bdy \geodModDomB$ and $y>0$
(we are only interested in the arc length near $0$). Using the parametrization $\Phi$ referred to just prior to
\eqref{E:tangent_angle_terms_of_y}, we see that the function that gives the arc length as a function of $y$,
which we denote by $G$, is
\begin{equation} \label{E:arclength_terms_of_y} 
G(y) = \int_0^y \|\Phi'(t)\| dt = \int_0^y \big[ 1+\alpha^2\omega(|t|)^2  \big]^{1/2} dt
\end{equation}
for all $y \in (-\epsilon_0,\epsilon_0)$. 
Clearly,
\begin{equation}\label{E:G_majorant}
  |G(y)| \geqslant |y| \quad \forall\,y \in (-\epsilon_0,\epsilon_0).
\end{equation}
Note that $G$ is a strictly increasing odd function on $(-\epsilon_0,\epsilon_0)$.
So $G^{-1}$ is a function that is defined on $(-G(\epsilon_0),G(\epsilon_0))$ and is strictly increasing.
Taking $y = G^{-1}(s)$, $s\in (-G(\epsilon_0),G(\epsilon_0))$, in \eqref{E:G_majorant}, we
get
\begin{equation} \label{E:G_inverse_majorant} 
|G^{-1}(s)| \leqslant |s| \quad \forall\,s \in (-G(\epsilon_0),G(\epsilon_0)).
\end{equation}
Now the function $\theta$ that gives the tangent angle as a function of arc length is
\[
\theta(s) = \widehat{\theta}(G^{-1}(s)) \quad \forall \, s \in (-G(\epsilon_0),G(\epsilon_0)).
\]
Recall that $|h'(y)| = \omega(|y|)$, and $\omega$ is continuous at $0$. Thus, we may suppose that $\epsilon_0$ is so small that, for every
$y \in (-\epsilon_0,\epsilon_0)$, $\alpha\omega(|y|) \leqslant 1$. Therefore, for an arbitrary $s \in (-G(\epsilon_0),G(\epsilon_0))$,
\begin{align*}
|\theta(s)| = |\widehat{\theta}(G^{-1}(s))|
		  &= \left| \tan^{-1}\big( \alpha h'(G^{-1}(s)) \big) \right| &&\text{[by \eqref{E:tangent_angle_terms_of_y}]} \\
		  &\leqslant \alpha |h'(G^{-1}(s))| && \text{[by \eqref{E:tan_cond}]} \\
		  &\leqslant \alpha \omega(|s|) && \text{[by \eqref{E:G_inverse_majorant}]}.
\end{align*}
This gives us the required result. 	
\end{proof}

\begin{remark}
The significance of Lemma~\ref{L:mod_dom_sat_Warsch_con} is as follows:
for every $\alpha>0, \tau>0$, the domain $\geodModDomB$ satisfies the hypotheses of
Result~\ref{R:Warsch_ext_bihol} at $0\in \bdy\geodModDomB$. Thus,
Corollary~\ref{C:coro_Warsch} holds.
\end{remark}

We are now ready to state and prove a generalization of Proposition~4.3 in \cite{Zim_CharDomLimAut}. The
generalization of the latter result
alone suffices to yield a generalization of Theorem~2.11 in \cite{Zim_CharDomLimAut}, which is fundamental
to establishing an extension-of-isometries theorem.

\begin{proposition} \label{P:param_as_K-a-g}
	Let $\Omega$ be an open convex subset of $\C^n$ having the properties
	possessed in common by $\OM_1$ and $\OM_2$ in the
	statement of Theorem~\ref{T:contExtIsom}.
	Then there exist $K,\epsilon > 0$ such that for every $\xi \in \bdy \Omega$,
	\begin{equation}
	\sigma_{\xi} \defeq t \mapsto \xi + \epsilon e^{-2t} \eta^{\xi} : [0,\infty) \to \Omega
	\end{equation}
	is a $K$-almost-geodesic. 
\end{proposition}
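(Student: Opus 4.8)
The plan is to verify the two defining conditions of a $K$-almost-geodesic for the curves $\sigma_\xi$, uniformly in $\xi \in \bdy\OM$. The upper bounds (the right-hand inequality in condition~(1) and condition~(2)) should follow from a uniform \emph{upper} bound on the Kobayashi distance along $\sigma_\xi$, while the lower bound in condition~(1) should follow from the lower bounds collected in Section~\ref{S:Some_facts_convex_domains}. First I would establish the upper bound. Fix $\xi \in \bdy\OM$ and recall from Proposition~\ref{P:emb_mod_dom} that the affine map $\Psi_\xi(\zeta) = \xi + \zeta\eta^\xi$ carries $\geodModDomB$ into $\OM$ for suitable fixed $\alpha,\tau > 0$. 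Since $\geodModDomB$ satisfies the hypotheses of Result~\ref{R:Warsch_ext_bihol} at $0$ (by Lemma~\ref{L:mod_dom_sat_Warsch_con} and the subsequent remark), the Riemann map $f$ from $\unitdisk$ onto $\geodModDomB$ with $f(1) = 0$ has a nonzero angular derivative at $1$, and by Corollary~\ref{C:coro_Warsch} its inverse $g = f^{-1}$ satisfies $g(\zeta)/\zeta \to L \neq 0$ as $\geodModDomB \ni \zeta \to 0$. The point of this is that it lets me compare distances along the real-axis slice $\{s : 0 < s < \tau\} \subset \geodModDomB$ with hyperbolic distances in $\unitdisk$: the points $\epsilon e^{-2t}\eta^\xi$ lie in the image $\Psi_\xi(\geodModDomB) \subset \OM$ provided $\epsilon < \tau$, so by the distance-decreasing property of $\koba$ under the holomorphic map $\Psi_\xi \circ f$,
\[
  \koba_{\OM}\big(\sigma_\xi(s), \sigma_\xi(t)\big) \leqslant \koba_{\geodModDomB}\big(\epsilon e^{-2s}, \epsilon e^{-2t}\big) \leqslant \koba_{\unitdisk}\big(g(\epsilon e^{-2s}), g(\epsilon e^{-2t})\big).
\]
Because $g(x) \sim Lx$ as $x \to 0^+$ and the hyperbolic distance in $\unitdisk$ between two points near $1$ of the form $1 - a$, $1 - b$ behaves like $\tfrac12|\log(a/b)|$, the right-hand side is $|s - t| + O(1)$ uniformly, which yields both the upper half of~(1) and, after absorbing the bounded error, condition~(2).

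For the lower bound in condition~(1), I would use Result~\ref{R:lb_Kob_dist_hyp} applied to a complex affine hyperplane $H$ disjoint from $\OM$ and suitably positioned relative to $\xi$ and $\eta^\xi$. The natural choice is a hyperplane parallel to $\xi + T^{\C}_\xi(\bdy\OM)$ pushed slightly outward, or more simply the real hyperplane through $\xi$ orthogonal to $\eta^\xi$ (translated off $\OM$); the key point is that $\distance(\sigma_\xi(t), H)$ is comparable to $\epsilon e^{-2t}$ up to a fixed additive constant independent of $\xi$, because $\eta^\xi$ is a unit vector and the $\xi$-dependence enters only through a uniformly controlled geometric configuration. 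Then Result~\ref{R:lb_Kob_dist_hyp} gives
\[
  \koba_{\OM}\big(\sigma_\xi(s), \sigma_\xi(t)\big) \geqslant \tfrac12\Big|\log\tfrac{\distance(\sigma_\xi(s), H)}{\distance(\sigma_\xi(t), H)}\Big| = |s - t| + O(1),
\]
again uniformly in $\xi$, completing condition~(1).

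The main obstacle I anticipate is obtaining \emph{uniformity in $\xi$} of the $O(1)$ error terms in the upper bound. Corollary~\ref{C:coro_Warsch} delivers the angular-derivative asymptotics for a \emph{single} domain $\geodModDomB$, which does not depend on $\xi$ at all---so the comparison between $\koba_{\geodModDomB}$ and $\koba_{\unitdisk}$ is already uniform. The genuine work is therefore twofold: first, making the passage from ``$g(x) \sim Lx$'' to a two-sided estimate of the form $\tfrac12|\log(s/t)| - C \leqslant \koba_{\geodModDomB}(s, t) \leqslant \tfrac12|\log(s/t)| + C$ valid for all $s, t$ in a fixed subinterval $(0, \tau')$, where $C$ depends only on $\geodModDomB$; and second, checking that the reparametrization $t \mapsto \epsilon e^{-2t}$ is precisely the one that converts this logarithmic estimate into the additive $|s - t| \pm \log K$ form, which pins down the exponent $-2$ and the role of $\epsilon$. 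Since $\geodModDomB$ is a single fixed domain independent of $\xi$, once these two scalar estimates are in hand the constants $K$ and $\epsilon$ are automatically uniform, and the lower bound via Result~\ref{R:lb_Kob_dist_hyp} requires only that the geometric configuration of $H$ relative to $(\xi, \eta^\xi)$ be controlled uniformly, which follows from $\|\eta^\xi\| = 1$ and the compactness of $\bdy\OM$.
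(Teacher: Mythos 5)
Your treatment of the additive bounds in condition~(1) follows the paper's route (embed $\geodModDomB$ via $\Psi_\xi$, apply Warschawski/Corollary~\ref{C:coro_Warsch} to the Riemann map of the single $\xi$-independent domain $\geodModDomB$, and push forward by distance-decrease), and the uniformity-in-$\xi$ discussion is sound. But there is a genuine gap in your handling of condition~(2): the Lipschitz estimate $\koba_{\OM}(\sigma_\xi(s),\sigma_\xi(t))\leqslant K|s-t|$ cannot be obtained by ``absorbing the bounded error'' from the bound $|s-t|+O(1)$. For $|s-t|$ small, $|s-t|+C$ is bounded below by $C>0$ while $K|s-t|\to 0$, so no choice of $K$ makes the absorption work. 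This condition is a statement about the infinitesimal behaviour of $\sigma_\xi$ and requires a separate, metric-level argument: the paper inscribes a cone $\{\zeta : 0<\rprt(\zeta)<2\epsilon,\ |\iprt(\zeta)|<\rprt(\zeta)\}$ into $\OM$ along $\eta^\xi$ (possible uniformly by $\smoo^1$-smoothness of $\bdy\OM$), uses Graham's estimate (Result~\ref{R:Grahams_result}) to get $\dkoba_{\OM}(\sigma_\xi(t),\sigma_\xi'(t))\leqslant 2/C$ uniformly, and integrates. Your proposal omits this entirely.

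There is also a problem with your choices of hyperplane for the lower bound. Taking $H$ parallel to $\xi+T^{\C}_{\xi}(\bdy\OM)$ but ``pushed slightly outward'' by some $\delta>0$ gives $\distance(\sigma_\xi(t),H)=\epsilon e^{-2t}+\delta$, and then $\tfrac12\bigl|\log\bigl((\epsilon e^{-2s}+\delta)/(\epsilon e^{-2t}+\delta)\bigr)\bigr|$ stays \emph{bounded} as $s,t\to\infty$, so you do not recover $|s-t|-O(1)$; an additive perturbation of the distances destroys the logarithmic ratio. Your alternative, the real hyperplane orthogonal to $\eta^\xi$, does not satisfy the hypothesis of Result~\ref{R:lb_Kob_dist_hyp}, which requires a \emph{complex} affine hyperplane. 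The correct choice is $H=\xi+T^{\C}_{\xi}(\bdy\OM)$ itself, unperturbed: it is automatically disjoint from the open convex domain $\OM$ (being contained in the real supporting hyperplane at $\xi$), and $\distance(\sigma_\xi(t),H)$ equals $\epsilon e^{-2t}$ \emph{exactly}, so Result~\ref{R:lb_Kob_dist_hyp} gives the clean lower bound $|s-t|$ with no error term.
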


\begin{proof}
Our proof will resemble, in \emph{essence}, the proof of Proposition~4.3 in \cite{Zim_CharDomLimAut}.
The two proofs will differ in the key detail that we must work with the domains $\geodModDomB$, which are
adapted to the domain $\OM$ under consideration.

\smallskip	

By Proposition~\ref{P:emb_mod_dom} there exist $\alpha,\tau > 0$ such that for every $\xi \in \bdy \Omega$, $\xi +
\geodModDomB \eta^{\xi} \subset \Omega$.  
As $\geodModDomB$ is a bounded open convex subset of $\C$ symmetric
about the real axis, there exists a biholomorphism $g: \geodModDomB \to \unitdisk$ such that
$g \big( \geodModDomB \cap\R \big) = \unitdisk \cap \R$. By Carath\'{e}odory's theorem, $g$ extends to a
homeomorphism from
$\overline{\geodModDomB}$ to $\overline{\unitdisk}$. We may suppose, without loss of generality, that $g(0)=1$.
By the remark following the proof of Lemma~\ref{L:mod_dom_sat_Warsch_con}, we see that we can apply 
Corollary~\ref{C:coro_Warsch} to $g$ to conclude that
\[
  \lim_{\geodModDomB\ni z \to 0} \frac{g(z)-g(0)}{z} = \lim_{\geodModDomB\ni z \to 0}
  \frac{g(z)-1}{z} 
\]
exists (call it $k$) and is non-zero. Therefore $k$ is a negative real number. Thus, there exist constants
$\epsilon > 0$ and $\kappa \geqslant 1$ such that $t \in \geodModDomB$ whenever 
$0 < t \leqslant \epsilon$ and
\begin{equation} \label{E:Phi_near_0}
  0 \leqslant 1-\kappa{}t \leqslant g(t) \leqslant 1-\kappa^{-1}t \quad
  \forall\,t: 0 < t \leqslant \epsilon.
\end{equation}

Then for $t_1,t_2$ such that $0 < t_1 < t_2 \leqslant \epsilon$, we have
\begin{align*}
  \koba_{\geodModDomB}(t_1,t_2) &= \koba_{\unitdisk}(g(t_1), g(t_2))
  		= \frac{1}{2}\log\left( \frac{\big( 1+g(t_1) \big)
  		   \big( 1-g(t_2) \big)}{\big( 1+g(t_2) \big)\big( 1-g(t_1) \big)} \right) \\
  &\leqslant \frac{1}{2}\log2 + \frac{1}{2}\log\left(\frac{1-g(t_2)}{1-g(t_1)}\right)\\
  &\leqslant \frac{1}{2}\log2 + \log(\kappa) + \frac{1}{2}\log(t_2/t_1),  
\end{align*}
by \eqref{E:Phi_near_0}. So for $\xi \in \bdy \Omega$ and $t,s \in [0,\infty)$ arbitrary,
\begin{align*}
  \koba_{\Omega}(\sigma_{\xi}(t),\sigma_{\xi}(s)) &= \koba_{\Omega}
  \left( \Psi_{\xi}\big(\epsilon e^{-2t}\big),\Psi_{\xi}\big(\epsilon
  e^{-2s}\big) \right) \notag \\
  &\leqslant \koba_{\geodModDomB}(\epsilon e^{-2t},\epsilon e^{-2s}) \notag \\
  &\leqslant \log(\sqrt{2}\kappa) + (1/2) \log(\epsilon e^{-2t}/\epsilon e^{-2s}) \notag \\
  &=\log(\sqrt{2}\kappa) + (s-t), 
\end{align*}
provided $s\geqslant{}t$, where $\Psi_{\xi}$ is as introduced in Proposition~\ref{P:emb_mod_dom}. In general
\begin{equation} \label{E:upper_bd_sigma_xi}
  \koba_{\Omega}(\sigma_{\xi}(t),\sigma_{\xi}(s)) \leqslant \log(\sqrt{2}\kappa) + |s-t| \quad \forall \, s,t \in [0,\infty).
\end{equation}
Consider the complex affine hyperplane $\xi+T^{\C}_{\xi}(\bdy \Omega)$ tangent to $\bdy \Omega$ at $\xi$. Of course,
$\xi+T^{\C}_{\xi}(\bdy \Omega)$ is a complex affine supporting hyperplane for $\Omega$ at $\xi$. For $t
\in \R$ arbitrary, the distance of $\sigma_{\xi}(t)$ from $\xi+T^{\C}_{\xi}(\bdy \Omega)$ is clearly $\epsilon e^{-2t}$. Consequently, by
Result~\ref{R:lb_Kob_dist_hyp},
\begin{equation} \label{E:lower_bd_sigma_xi}
  \koba_{\Omega}(\sigma_{\xi}(t),\sigma_{\xi}(s)) \geqslant (1/2) | \log( \epsilon e^{-2t}/ \epsilon e^{-2s} ) | = |s-t| \quad \forall \,
  s,t \in [0,\infty). 
\end{equation}
By \eqref{E:upper_bd_sigma_xi} and \eqref{E:lower_bd_sigma_xi}, each $\sigma_{\xi}$ is a $(1,\log(\sqrt{2}\kappa))$-quasi-geodesic.
It only remains to prove the Lipschitz nature of $\sigma_{\xi}$.

\smallskip

By the fact that the boundary of $\OM$ is $\smoo^1$, we
can, by shrinking $\epsilon$ if necessary, ensure that for every $\xi \in \bdy \Omega$, $\xi+\mathcal{B}_{\epsilon}\eta^{\xi}
\subset \Omega$,
where
\[
  \mathcal{B}_{\epsilon} \defeq \{ \zeta \in \C \mid 0 < \rprt(\zeta) < 2\epsilon, |\iprt(\zeta)| < \rprt(\zeta) \}.
\]
Elementary two-dimensional geometry then shows that there is a $C>0$ such that for every $\xi \in \bdy
\Omega$ and every $t \in [0,\infty)$,
\[ 
  r_{\Omega}(\sigma_{\xi}(t),\sigma'_{\xi}(t)) \geqslant 
  C \epsilon e^{-2t}.
\]
(In fact, given that $\xi+\mathcal{B}_{\epsilon}\eta^{\xi}\subset \Omega$, $C = 1/\sqrt{2}$ would work.)
Therefore,
by Graham's estimate\,---\,i.e., Result~\ref{R:Grahams_result}\,---\,for
every $\xi \in \bdy \Omega$ and every $t \in [0,\infty)$,
\[
  \kappa_{\Omega}(\sigma_{\xi}(t),\sigma'_{\xi}(t)) \leqslant \frac{\| \sigma'_{\xi}(t) \|}{ r_{\Omega}(\sigma_{\xi}(t),\sigma'_{\xi}(t)) }
  \leqslant \frac{2 \epsilon e^{-2t}}{C \epsilon e^{-2t}} = \frac{2}{C}.
\]
Consequently, for every $\xi \in \bdy \Omega$ and every $s,t \in [0,\infty)$,
\begin{equation} \label{E:sigma_xi_Lip_cond}
  \koba_{\Omega}(\sigma_{\xi}(s),\sigma_{\xi}(t)) \leqslant (2/C)|s-t|.
\end{equation}
Therefore, by \eqref{E:upper_bd_sigma_xi}, \eqref{E:lower_bd_sigma_xi} and \eqref{E:sigma_xi_Lip_cond}, it follows that for
every $\xi \in \bdy \Omega$, $\sigma_{\xi}$ is a $K$-almost-geodesic, where $K \defeq \max\{ \sqrt{2}\kappa,2/C \}$. 
\end{proof}

An outcome of one half of our argument for Proposition~\ref{P:param_as_K-a-g} is the following

\begin{proposition}\label{P:Kob_dist_gen_ub}
	Let $\Omega \subset \C^n$ be a bounded convex domain having the properties common
	to $\OM_1$ and $\OM_2$ as in the statement of Theorem~\ref{T:contExtIsom}. Let $z_0\in \OM$. Then, there
	exists a constant $C>0$ such that
	\[
	  \koba_{\OM}(z_0, z) \leqslant C + \frac{1}{2}\log\left( \frac{1}{\distance(z,\OM^{\cmpl})} \right)
	  \quad \forall z\in \OM.
	\]
\end{proposition}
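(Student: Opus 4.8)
The plan is to extract the required upper bound directly from the first half of the proof of Proposition~\ref{P:param_as_K-a-g}, in which the embedding $\Psi_{\xi}(\geodModDomB) \subset \OM$ was used to control the Kobayashi distance along inward normals. I would first recall from that proof that, by Proposition~\ref{P:emb_mod_dom}, there are $\alpha,\tau > 0$ with $\Psi_\xi(\geodModDomB) \subset \OM$ for every $\xi \in \bdy\OM$, and that the biholomorphism $g : \geodModDomB \to \unitdisk$ produces constants $\epsilon > 0$ and $\kappa \geqslant 1$\,---\,all independent of $\xi$, since $\geodModDomB$ and $g$ are\,---\,such that
\[
  \koba_{\geodModDomB}(t_1, t_2) \leqslant \tfrac{1}{2}\log 2 + \log\kappa + \tfrac{1}{2}\log(t_2/t_1), \qquad 0 < t_1 < t_2 \leqslant \epsilon.
\]
Since $\Psi_\xi$ is $\C$-affine, hence holomorphic, and maps $\geodModDomB$ into $\OM$, the distance-decreasing property of the Kobayashi distance gives $\koba_\OM(\Psi_\xi(t_1), \Psi_\xi(t_2)) \leqslant \koba_{\geodModDomB}(t_1, t_2)$ for $t_1, t_2 \in \geodModDomB \cap \R$.

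Next I would pass from an arbitrary boundary-near point to the model domain via nearest-point projection. Fix $z \in \OM$ with $\dtb{\OM}(z) \leqslant \epsilon$ and let $\xi \in \bdy\OM$ be a point of $\OM^{\cmpl}$ nearest to $z$, so that $\|z - \xi\| = \dtb{\OM}(z)$. Because $\OM$ is convex with $\smoo^1$-smooth boundary, the segment from $\xi$ to $z$ runs along the inward normal, i.e. $z = \xi + \dtb{\OM}(z)\,\eta^\xi = \Psi_\xi(\dtb{\OM}(z))$, and $\dtb{\OM}(z)$ is a point of $\geodModDomB \cap \R$. Applying the two displayed inequalities with $t_1 = \dtb{\OM}(z)$ and $t_2 = \epsilon$ yields
\[
  \koba_\OM\big(z, \Psi_\xi(\epsilon)\big) \leqslant \tfrac{1}{2}\log 2 + \log\kappa + \tfrac{1}{2}\log\epsilon + \tfrac{1}{2}\log\!\left( \frac{1}{\dtb{\OM}(z)} \right).
\]

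It remains to anchor this at the base point $z_0$. The set $\{\Psi_\xi(\epsilon) : \xi \in \bdy\OM\}$ is the image of the compact set $\bdy\OM$ under the continuous map $\xi \mapsto \xi + \epsilon\eta^\xi$ and lies in $\OM$, hence is contained in a compact subset of $\OM$; by continuity of $\koba_\OM(z_0, \bcdot)$ there is an $M < \infty$ with $\koba_\OM(z_0, \Psi_\xi(\epsilon)) \leqslant M$ for all $\xi$. The triangle inequality then gives, for every $z$ with $\dtb{\OM}(z) \leqslant \epsilon$,
\[
  \koba_\OM(z_0, z) \leqslant M + \tfrac{1}{2}\log 2 + \log\kappa + \tfrac{1}{2}\log\epsilon + \tfrac{1}{2}\log\!\left( \frac{1}{\dtb{\OM}(z)} \right).
\]
For the remaining points, the set $\{z \in \overline{\OM} : \dtb{\OM}(z) \geqslant \epsilon\}$ is a compact subset of $\OM$, so $\koba_\OM(z_0, \bcdot)$ is bounded there, while $\tfrac{1}{2}\log(1/\dtb{\OM}(z))$ is bounded below because $\OM$ is bounded. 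Absorbing all these bounded quantities into a single constant $C$ gives the claim.

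The main obstacle I expect is the geometric justification that the nearest point $\xi$ is attained along the inward normal, so that $z = \Psi_\xi(\dtb{\OM}(z))$; this is where convexity together with $\smoo^1$-smoothness of $\bdy\OM$ is essential, and it is precisely what guarantees that $\dtb{\OM}(z)$ genuinely lands in $\geodModDomB \cap \R$, so that the model-domain estimate applies. Everything else is a matter of assembling the uniform-in-$\xi$ bounds and splitting into the two regimes $\dtb{\OM}(z) \leqslant \epsilon$ and $\dtb{\OM}(z) > \epsilon$.
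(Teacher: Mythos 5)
Your proposal is correct and follows essentially the same route as the paper: the paper likewise anchors $z_0$ to the uniformly compact set of points $\xi+\epsilon\eta^{\xi}$, writes a near-boundary point as $z=\xi+\dtb{\OM}(z)\eta^{\xi}$ via nearest-point projection along the normal, and invokes the upper bound extracted from the first half of the proof of Proposition~\ref{P:param_as_K-a-g} (the paper phrases it through the reparametrized estimate \eqref{E:upper_bd_sigma_xi} for $\sigma_{\xi}$ rather than directly through $\koba_{\geodModDomB}$, which is the same inequality). The geometric step you flag\,---\,that the minimizing segment from $z$ to $\bdy\OM$ is normal to $\bdy\OM$ at the foot point, so $\dtb{\OM}(z)$ lies in $\geodModDomB\cap\R$\,---\,is exactly the step the paper also relies on (and treats as clear), so there is no gap.
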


\begin{remark}
	Since the domain $\OM$ in the statement of Proposition~\ref{P:Kob_dist_ub} is bounded, whence
	$\bdy\OM$ is compact, it is easy to see that Proposition~\ref{P:Kob_dist_ub} is a special case of the above.
\end{remark}

\begin{proof}
We abbreviate $\distance(z,\OM^{\cmpl})$ to $\dtb{\OM}(z)$. From the argument leading up to
\eqref{E:upper_bd_sigma_xi} in the proof of Proposition~\ref{P:param_as_K-a-g}, we conclude 
that there exist constants $\epsilon>0$ and $K \geqslant 1$ such that for every $\xi \in \bdy \OM$, and
$\sigma_{\xi}$ as in that proposition,
\begin{equation}\label{E:Kob_dist_sigma_xi}
  \koba_{\OM}(\sigma_{\xi}(t),\sigma_{\xi}(s)) \leqslant |s-t|+\log K \quad \forall s,t \in [0,\infty).
\end{equation}
By compactness, there exists a $\delta\in (0, \epsilon)$ such that
$\distance(\{ \sigma_{\xi}(0) \mid \xi \in \bdy \OM \}, \OM^{\cmpl}) \geqslant \delta$. 
It suffices to show that there exists $C > 0$ such that 
for every $z \in \OM$ with $\dtb{\OM}(z)<\delta$, $\koba_{\OM}(z_0,z) \leqslant C + 2^{-1} \log(1/\dtb{\OM}(z))$. 
Let $C' \defeq \sup_{\xi \in \bdy \OM} \koba_{\OM}(z_0,\sigma_{\xi}(0))$. So let
$z \in \OM$ with $\dtb{\OM}(z) < \delta$. Now fix a $\xi \in \bdy \OM$ such that $\|z-\xi\|=\dtb{\OM}(z)$. Clearly, 
then, there exists $t(z) \in (0,\infty)$ such that $z=\sigma_{\xi}(t(z))$.	
So $\koba_{\OM}(z_0,z) \leqslant \koba_{\OM}(z_0,\sigma_{\xi}(0)) +
\koba_{\OM}\big( \sigma_{\xi}(0),\sigma_{\xi}(t(z)) \big) \leqslant	
C'+t(z)+\log K$, by \eqref{E:Kob_dist_sigma_xi}. Since, by definition of
$\sigma_{\xi}$, $\dtb{\OM}(z) = \epsilon e^{-2t(z)}$, a
simple calculation shows that $t(z) \leqslant 2^{-1} \log(1/\dtb{\OM}(z))$. Therefore	
$\koba_{\OM}(z_0,z) \leqslant C + 2^{-1} \log(1/\dtb{\OM}(z))$, where $C \defeq C' + \log K$. This gives the desired conclusion.  
\end{proof}

\section{The proof of Theorem~\ref{T:contExtIsom}} \label{S:proof_main_thm}

The proof of Theorem~\ref{T:contExtIsom} requires the following conclusions: if $\OM \subset \C^n$ is a domain that has
the properties possessed in common by $\OM_1$ and $\OM_2$ in the statement of Theorem~\ref{T:contExtIsom} then
(we remind the reader that for $\xi \in \bdy \OM$, the set $(\xi+T^{\C}_{\xi}(\bdy \OM))$ is as described in
Section~\ref{S:Some_facts_convex_domains}):
\begin{enumerate}
	\item If $\xi \in \bdy \OM$ and $(p_{\nu})_{\nu \geqslant 1}$, $(q_{\mu})_{\mu \geqslant 1}$ are sequences in $\OM$ converging to
	$\xi$, then
	\[
	  \lim_{\nu,\mu \to \infty} (p_{\nu}|q_{\mu})_o = \infty.
	\] 
	\item If $\xi,\xi' \in \bdy \OM$ and $(p_{\nu})_{\nu \geqslant 1}, (q_{\mu})_{\mu \geqslant 1}$ are sequences in $\OM$ converging to
	$\xi$ and $\xi'$ respectively such that
	\[
	  \limsup_{\nu,\mu \to \infty} \; (p_{\nu}|q_{\mu})_o = \infty,
	\]
	then $\xi+T^{\C}_{\xi}(\bdy \OM) = \xi'+T^{\C}_{\xi'}(\bdy \OM)$. 
\end{enumerate}
In the above, $(\cdot\mid\cdot)_o$ denotes the Gromov product relative to the Kobayashi distance on $\OM$ and with respect to an
arbitrary but fixed base point $o \in \OM$. It is defined as 
\[
  (x|y)_o \defeq \frac{1}{2} \big( \koba_{\OM}(x,o)+\koba_{\OM}(y,o)-\koba_{\OM}(x,y) \big). 
\]

\smallskip

The above conclusions have been demonstrated by Zimmer under the conditions he states in \cite[Theorem~4.1]{Zim_CharDomLimAut}. We 
observe that what has actually been established in \cite[Theorem~4.1]{Zim_CharDomLimAut} is the following:

\begin{proposition} \label{P:nice_doms_Grom_hyp_app}
	Suppose $\OM$ is a bounded open convex subset of $\C^n$ having $\smoo^1$-smooth boundary. Suppose $\OM$ possesses the property that
	there exist constants $\epsilon > 0, K \geqslant 1$ such that, for each $\xi \in \bdy \OM$, the path
	\[
	  \sigma_{\xi} \defeq t \mapsto \xi + \epsilon e^{-2t} \eta^{\xi} : [0,\infty) \to \OM
	\]
	is a $K$-almost-geodesic. Then: 
	\begin{enumerate}
		\item \label{Cs:Grom_hyp_same_pt} If $\xi \in \bdy \OM$ and $(p_{\nu})_{\nu \geqslant 1}$, $(q_{\mu})_{\mu \geqslant 1}$ are
		sequences in $\OM$ converging to $\xi$, then
		\[
		  \lim_{\nu,\mu \to \infty} (p_{\nu}|q_{\mu})_o = \infty.
		\] 
		\item \label{Cs:Grom_hyp_diff_pts} If $\xi,\xi' \in \bdy \OM$ and $(p_{\nu})_{\nu \geqslant 1}, (q_{\mu})_{\mu \geqslant 1}$ are
		sequences in $\OM$ converging to $\xi$ and $\xi'$ respectively such that
		\[
		  \limsup_{\nu,\mu \to \infty} \; (p_{\nu}|q_{\mu})_o = \infty,
		\]
		then $\xi+T^{\C}_{\xi}(\bdy \OM) = \xi'+T^{\C}_{\xi'}(\bdy \OM)$. 
	\end{enumerate}
\end{proposition}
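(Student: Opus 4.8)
The plan is to treat the two assertions separately; in both, the engine is a two-sided comparison of $\koba_{\OM}(o,z)$ with $\tfrac12\log(1/\dtb{\OM}(z))$. For the upper half I have Proposition~\ref{P:Kob_dist_gen_ub} directly, giving $\koba_{\OM}(o,z) \leqslant C_1 + \tfrac12\log(1/\dtb{\OM}(z))$. For a matching lower bound, given $z$ near $\bdy\OM$ I would let $\xi_z\in\bdy\OM$ be a nearest boundary point, so that $z=\xi_z+\dtb{\OM}(z)\,\eta^{\xi_z}$, and apply Result~\ref{R:lb_Kob_dist_hyp} to the complex supporting hyperplane $H_z \defeq \xi_z + T^{\C}_{\xi_z}(\bdy\OM)$ (which is disjoint from $\OM$, being contained in the real supporting hyperplane at $\xi_z$). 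Since $\distance(z,H_z)=\dtb{\OM}(z)$ and $\distance(o,H_z)\geqslant\dtb{\OM}(o)>0$, this yields $\koba_{\OM}(o,z)\geqslant \tfrac12\log(1/\dtb{\OM}(z)) - C_o$ with $C_o=\tfrac12\log(1/\dtb{\OM}(o))$.

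For assertion (2) I would argue by contraposition. Assuming $\xi+T^{\C}_{\xi}(\bdy\OM)\neq\xi'+T^{\C}_{\xi'}(\bdy\OM)$, I invoke Result~\ref{R:lb_Kob_dist_diff_tanspcs} to obtain constants $\epsilon,C>0$; since $p_\nu\to\xi$ and $q_\mu\to\xi'$ lie, for all large $\nu,\mu$, within $\epsilon$ of the respective complex tangent hyperplanes, the cited lower bound gives
\[
  \koba_{\OM}(p_\nu,q_\mu)\geqslant \tfrac12\log\!\big(1/\dtb{\OM}(p_\nu)\big) + \tfrac12\log\!\big(1/\dtb{\OM}(q_\mu)\big) - C.
\]
Feeding this, together with the upper bound of Proposition~\ref{P:Kob_dist_gen_ub} for $\koba_{\OM}(o,p_\nu)$ and $\koba_{\OM}(o,q_\mu)$, into the definition of $(p_\nu|q_\mu)_o$ makes the two $\tfrac12\log(1/\dtb{\OM}(\cdot))$ terms cancel, so $(p_\nu|q_\mu)_o$ stays bounded above\,---\,contradicting $\limsup_{\nu,\mu}(p_\nu|q_\mu)_o=\infty$.

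Assertion (1) is the substantive one, and here the difficulty is an upper bound for $\koba_{\OM}(p_\nu,q_\mu)$ sharp enough to force the Gromov product to diverge: the naive route that climbs both normals to a \emph{fixed} interior depth only yields $\koba_{\OM}(p_\nu,q_\mu)\leqslant \tfrac12\log(1/\dtb{\OM}(p_\nu))+\tfrac12\log(1/\dtb{\OM}(q_\mu))+O(1)$, which keeps $(p_\nu|q_\mu)_o$ merely bounded below. The gain must come from exploiting that both sequences approach the \emph{same} $\xi$. Writing $p_\nu=\sigma_{\xi_\nu}(t_\nu)$ and $q_\mu=\sigma_{\xi_\mu'}(s_\mu)$ via nearest-point normals, I would, for a depth parameter $\rho\geqslant\max(\dtb{\OM}(p_\nu),\dtb{\OM}(q_\mu))$, use the $K$-almost-geodesic property to climb each normal up to the \emph{common} depth $\rho$, reaching points $\hat p,\hat q$ at respective costs $\tfrac12\log(\rho/\dtb{\OM}(p_\nu))+\log K$ and $\tfrac12\log(\rho/\dtb{\OM}(q_\mu))+\log K$. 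I then estimate the lateral cost $\koba_{\OM}(\hat p,\hat q)$ by integrating Graham's upper bound (Result~\ref{R:Grahams_result}) along the Euclidean segment $[\hat p,\hat q]$: since $B^{(n)}(\hat p,\rho),\,B^{(n)}(\hat q,\rho)\subset\OM$ and $\OM$ is convex, every point $z$ of the segment satisfies $\dtb{\OM}(z)\geqslant\rho$, whence $r_{\OM}(z,v)\geqslant\dtb{\OM}(z)\geqslant\rho$ there and $\koba_{\OM}(\hat p,\hat q)\leqslant\|\hat p-\hat q\|/\rho$. Because $\xi_\nu,\xi_\mu'\to\xi$ and the normal field is continuous ($\smoo^1$ boundary), $\|\hat p-\hat q\|\leqslant\beta_{\nu\mu}+\rho\,\gamma_{\nu\mu}$, where $\beta_{\nu\mu}\defeq\|\xi_\nu-\xi_\mu'\|\to0$ and $\gamma_{\nu\mu}\defeq\|\eta^{\xi_\nu}-\eta^{\xi_\mu'}\|\to0$.

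Collecting these estimates gives $\koba_{\OM}(p_\nu,q_\mu)\leqslant \tfrac12\log(1/\dtb{\OM}(p_\nu))+\tfrac12\log(1/\dtb{\OM}(q_\mu))+\log\rho+\beta_{\nu\mu}/\rho+\gamma_{\nu\mu}+O(1)$. Combined with the lower bound for $\koba_{\OM}(o,\cdot)$ from the first paragraph, the leading $\log(1/\dtb{\OM})$ terms cancel in the Gromov product and one is left with
\[
  (p_\nu|q_\mu)_o\geqslant \tfrac12\log(1/\rho)-\tfrac12\,\beta_{\nu\mu}/\rho-\tfrac12\gamma_{\nu\mu}-O(1).
\]
The final step is the choice $\rho=\rho_{\nu\mu}\defeq\max\big(\dtb{\OM}(p_\nu),\dtb{\OM}(q_\mu),\sqrt{\beta_{\nu\mu}}\big)$, which tends to $0$ while keeping $\beta_{\nu\mu}/\rho_{\nu\mu}\leqslant\sqrt{\beta_{\nu\mu}}\to0$; then $\log(1/\rho_{\nu\mu})\to\infty$ dominates and $(p_\nu|q_\mu)_o\to\infty$. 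I expect the balancing of the climb cost $\log(1/\rho)$ against the lateral cost $\beta_{\nu\mu}/\rho$\,---\,i.e. the correct choice of intermediate depth\,---\,to be the main obstacle, with the elementary observation that the segment $[\hat p,\hat q]$ stays at depth $\geqslant\rho$ (via convexity and inscribed balls) serving as the key enabling geometric fact.
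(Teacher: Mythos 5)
Your strategy is the right one and is, in substance, the argument of Zimmer's Theorem~4.1 to which the paper defers (the paper explicitly omits the proof, noting that Zimmer's argument goes through verbatim once the paths $\sigma_{\xi}$ are known to be $K$-almost-geodesics). The two-sided comparison $\tfrac12\log(1/\dtb{\OM}(z))-C_o\leqslant\koba_{\OM}(o,z)\leqslant\tfrac12\log(1/\dtb{\OM}(z))+C_1$, the contrapositive treatment of assertion~(2) via Result~\ref{R:lb_Kob_dist_diff_tanspcs}, and the climb-to-a-common-depth-then-cross scheme for assertion~(1) with the balancing choice $\rho_{\nu\mu}=\max\big(\dtb{\OM}(p_\nu),\dtb{\OM}(q_\mu),\sqrt{\beta_{\nu\mu}}\big)$ are all correct and complete in outline. (A cosmetic point: Proposition~\ref{P:Kob_dist_gen_ub} is stated under the Dini hypotheses rather than under the almost-geodesic hypothesis of the present proposition; but its proof uses only the latter, so you should simply rederive the upper bound on $\koba_{\OM}(o,z)$ directly from the hypothesis here, exactly as in that proof.)

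The one genuine error is the step you yourself flag as the key enabling geometric fact: the inclusion $B^{(n)}(\hat p,\rho)\subset\OM$, i.e.\ the assertion that $\hat p=\xi_\nu+\rho\,\eta^{\xi_\nu}$ satisfies $\dtb{\OM}(\hat p)=\rho$. Convexity only gives $\dtb{\OM}(\hat p)\leqslant\rho$ (via the supporting hyperplane at $\xi_\nu$); equality is an interior-ball condition that fails for $\smoo^1$, and even $\smoo^{1,\alpha}$, boundaries. For example, for $\OM=\{(x,y)\in\R^2 : y>|x|^{3/2}\}$ (suitably truncated), the boundary point $(\rho^2,\rho^3)$ lies at distance $\rho\sqrt{1-\rho^2+\rho^4}<\rho$ from $(0,\rho)$, so the ball of radius $\rho$ about $(0,\rho)$ is not contained in $\OM$. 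The repair is standard and is already implicit in the proof of Proposition~\ref{P:param_as_K-a-g}: by $\smoo^1$-smoothness and compactness of $\bdy\OM$ there is an $h>0$ such that, for every $\xi\in\bdy\OM$, the open cone with vertex $\xi$, axis $\eta^{\xi}$, half-angle $\pi/4$, and height $h$ lies in $\OM$; hence $\dtb{\OM}(\hat p)\geqslant\rho/\sqrt2$ for $\rho\leqslant h/2$, every point of the segment $[\hat p,\hat q]$ has boundary distance at least $\rho/\sqrt2$, and your lateral estimate becomes $\koba_{\OM}(\hat p,\hat q)\leqslant\sqrt2\,\|\hat p-\hat q\|/\rho$. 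This changes only constants, and the rest of your computation, hence the conclusion, is unaffected.
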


The condition on $\bdy \OM$ in \cite[Theorem~4.1]{Zim_CharDomLimAut} was required to obtain the property concerning the paths $\{
\sigma_{\xi} \mid \xi \in \bdy \OM \}$ stated in Proposition~\ref{P:nice_doms_Grom_hyp_app}. Other than this, there is absolutely no
difference between the proofs of \cite[Theorem~4.1]{Zim_CharDomLimAut} and Proposition~\ref{P:nice_doms_Grom_hyp_app}. We therefore omit
the proof of the latter.

\smallskip

Finally, we give the proof of Theorem~\ref{T:contExtIsom}.

\begin{proof}
First, we show that whenever $\xi \in \bdy \OM_1$, $\lim_{z \to \xi} F(z)$ exists. Since $F$ is an isometry with respect to the Kobayashi
distances, we see, from the definition of the Gromov product above, that, for every $z,w,o \in \OM_1$,
\[
  (z|w)_o = (F(z)|F(w))_{F(o)}.
\]
First note that if $\xi \in \bdy \OM_1$ and $(z_{\nu})_{\nu \geqslant 1}$ is a sequence in $\OM_1$ converging to $\xi$ such that
$(F(z_{\nu}))_{\nu \geqslant 1}$ converges to some point $\zeta \in \overline{\OM}_2$, then $\zeta \in \bdy\OM_2$. The reason is that, if
we fix a point $o \in \OM_1$ arbitrarily, we see that
\[
\lim_{\nu \to \infty} \koba_{\OM_2}(F(z_{\nu}),F(o)) = \lim_{\nu \to \infty} \koba_{\OM_1}(z_{\nu},o) = \infty,
\]  
by Lemma~\ref{R:lb_Kob_dist_hyp}. Consequently, $\zeta = \lim_{\nu \to \infty} F(z_{\nu})$ must belong to $\bdy \OM_2$. Thus, if $\xi \in
\bdy \OM_1$ and $(z_{\nu})_{\nu \geqslant 1}, (w_{\nu})_{\nu \geqslant 1}$ are sequences in $\OM_1$ converging to $\xi$
such that $(F(z_{\nu}))_{\nu \geqslant 1}$ and $(F(w_{\nu}))_{\nu \geqslant 1}$ converge to $\zeta_1, \zeta_2 \in \overline{\OM}_2$,
respectively, then $\zeta_1, \zeta_2 \in \bdy \OM_2$. Moreover,
\[
  \lim_{\nu \to \infty} (F(z_{\nu})|F(w_{\nu}))_{F(o)} = \lim_{\nu \to \infty} (z_{\nu}|w_{\nu})_o = \infty,
\]
by (\ref{Cs:Grom_hyp_same_pt}) of Proposition~\ref{P:nice_doms_Grom_hyp_app} above. Consequently, by (\ref{Cs:Grom_hyp_diff_pts}) of the
same proposition, $\zeta_1+T^{\C}_{\zeta_1}(\bdy \OM_2) = \zeta_2+T^{\C}_{\zeta_2}(\bdy \OM_2)$. Therefore, since $\OM_2$ is
$\C$-strictly convex, one has $\zeta_1= \zeta_2$. Since $\OM_2$ is bounded, so that any sequence in it has a convergent subsequence, the
above shows that $\lim_{z \to \xi} F(z)$ exists.

\smallskip

Then define $\widetilde{F}: \overline{\OM}_1 \to \overline{\OM}_2$ by letting $\widetilde{F}$ equal $F$ on $\OM_1$ and by letting 
$\widetilde{F}(\xi)$, for $\xi \in \bdy \OM_1$, be $\lim_{z \to \xi}F(z)$. It is routine to show that $\widetilde{F}$ is continuous, in
view of the conclusions of the previous paragraph. This completes the proof.	
\end{proof}

\smallskip

\section*{Acknowledgments}\vspace{-1.5mm}
\noindent{I thank the referee of this paper for his/her helpful suggestions concerning
the exposition in this work. This work is supported by a scholarship from the Indian Institute of Science and by a
UGC CAS-II grant (Grant No. F.510/25/CAS-II/2018(SAP-I)).}

\smallskip


\begin{thebibliography}{9}
	
\bibitem{Aba_Bdy_Bhvr_Inv_Dist}
Marco Abate, \emph{Boundary behaviour of invariant distances and complex geodesics}, Atti. Acad. Naz. Lincei Rend. Cl. Sci.
Fis. Mat. Natur., Serie 8, \textbf{80} (1986), no.~3, 100--106.	
	
\bibitem{Bha_Comp_Geo}
Gautam Bharali, \emph{Complex geodesics, their boundary regularity, and a Hardy--Littlewood-type lemma}, Ann. Acad. Sci.
Fennicae Math. \textbf{41} (2016), 253--263.

\bibitem{Bha_Zim_GoldDom}
Gautam Bharali and Andrew Zimmer, \emph{Goldilocks domains, a weak notion of visibility, and applications}, Adv. Math.
\textbf{310} (2017), 377--425.

\bibitem{Bra_Gau_Zim_HomeoExtQ-ISom}
Filippo Bracci, Herv{\'e} Gaussier and Andrew Zimmer, \emph{Homeomorphic extension of quasi-isometries for
convex domains in $\C^d$ and iteration theory}, preprint, arXiv reference: \texttt{arXiv:1808.07415v2}.

\bibitem{Forst_Ros_LocKobMet}
Franc Forstneric and J.-P. Rosay, \emph{Localization of the Kobayashi metric and the boundary continuity of proper 
holomorphic mappings}, Math. Ann. \textbf{279} (1987), 239--252.

\bibitem{Graham90}
Ian Graham, \emph{Distortion theorems for holomorphic maps between convex domains in $\mathbb{C}^n$}, Complex Variables Theory Appl.
\textbf{15} (1990), 37--42.

\bibitem{Graham91}
Ian Graham, \emph{Sharp constants for the Koebe theorem and for estimates of intrinsic metrics on convex domains}, Several Complex
Variables and Complex Geometry, Part 2 (Santa Cruz, CA, 1989), Proc. Sympos. Pure Math. \textbf{52}, 233--238, American Mathematical
Society, Providence, RI, 1991.

\bibitem{Huang_Bdy_Rig_Hol}
Xiaojun Huang, \emph{A boundary rigidity problem for holomorphic mappings of some weakly pseudoconvex domains}, Can. J. Math \textbf{47}
(1995), no.~2, 405--420.

\bibitem{Lemp_Kob_Met}
L{\'a}szl{\'o} Lempert, \emph{La metrique de Kobayashi et la repr{\'e}sentation des domaines sur la boule}, Bull. Soc. Math.
France \textbf{109} (1981), 227--274.

\bibitem{Lemp_Holo_Ret_Int_Met}
L{\'a}szl{\'o} Lempert, \emph{Holomorphic retracts and intrinsic metrics in convex domains}, Anal. Math. \textbf{8} (1982),
257--261.

\bibitem{Mer_Comp_Geo_It}
Peter R.~Mercer, \emph{Complex geodesics and iterates of holomorphic maps on convex domains in $\C^n$}, Trans. Amer. Math. Soc.
\textbf{338} (1993), no.~1, 201--211.	

\bibitem{Pommeremke_BdyBehvConfMap}
Ch. Pommerenke, \emph{Boundary Behaviour of Conformal Maps}, Grundlehren der Mathematischen Wissenschaften
\textbf{299}, Springer-Verlag, Berlin, 1992.

\bibitem{Warschawski_DiffBdyConfMap}
S. E. Warschawski, \emph{On differentiability at the boundary in conformal mapping}, Proc. Amer. Math. Soc.
\textbf{12} (1961), 614--620.

\bibitem{Zim_GHyperbolicFinType}
Andrew M. Zimmer, \emph{Gromov hyperbolicity and the Kobayashi metric on convex domains of finite type}.
Math. Ann. \textbf{365} (2016), no.~3-4, 1425--1498.

\bibitem{Zim_CharDomLimAut}
Andrew M. Zimmer, \emph{Characterizing domains by the limit set of their automorphism group}, Adv. Math. \textbf{308}
(2017), 438--482.

\end{thebibliography}
\end{document}